\newtheorem{theorem}{Theorem}[section]
\newtheorem{lemma}[theorem]{Lemma}
\newtheorem{proposition}[theorem]{Proposition}
\newtheorem{corollary}[theorem]{Corollary}
\newtheorem{remark}[theorem]{Remark}
\newtheorem{problem}[theorem]{Problem}
\theoremstyle{definition}
\newtheorem{definition}[theorem]{Definition}
\newtheorem{thmy}{Theorem}
\newtheorem*{note*}{Note}
\begin{document}


\title{\bf On some problems concerning symmetrization operators}
\date{}
\medskip

\author {Christos Saroglou}

\maketitle

\begin{abstract}
\footnotesize In [G. Bianchi, R. J. Gardner and P. Gronchi, Symmetrization in Geometry, Adv. Math., vol. 306 (2017), 51-88], a systematic study of symmetrization operators on convex sets and their properties is conducted. In the end of their article, the authors pose several open questions. The primary goal of this manuscript is to study these questions.
\end{abstract}
\section{Introduction}\label{s-intro}

\hspace*{1.5em}In the past few years, problems of characterizing operators between convex sets by their properties have been extensively studied (see e.g. \cite{G-H-W-1}, \cite{G-H-W-2}, \cite{Mo}, \cite{S-W}, \cite{S-S}). In \cite{Bi-Ga-Gr}, the authors study symmetrization operators (see also \cite{girls} for problems concerning characterizations of symmetrization operators that are simultaneously Minkowski valuations). Our goal is to study the problems that they propose in their paper. It should be mentioned that symmetrization methods are very powerful tools for proving isoperimetric-type inequalities such as the classical Brunn-Minkowski inequality, the classical isoperimetric inequality, the Busemann centroid inequality and its recently established extensions and the Blaschke-Santal\'{o} inequality (see e.g. \cite{G2}, \cite{Bu}, \cite{LYZ}, \cite{LYZ2}, \cite{H-S}, \cite{Me-Pa}). Before describing our results, let us first fix some notation and terminology (closely following \cite{Bi-Ga-Gr}).

The origin in $\mathbb{R}^n$ is denoted by $o$ and the unit (Euclidean) ball in $\mathbb{R}^n$ is denoted by $B_2^n$. Set $G_{n,i}$ to be the family of all subspaces (i.e. the Grassmannian) of $\mathbb{R}^n$ of dimension $i\leq n$. The orthogonal projection of a set or a point $A$ in $\mathbb{R}^n$ onto a subspace $H$ is denoted by $A|H$. Set also $H^\perp$ to be the subspace which is orthogonal to $H$, where $H$ is a subspace or a vector. The support function $h_K:\mathbb{R}^n\to\mathbb{R}$ of a convex set $K$ in $\mathbb{R}^n$ is defined as follows:
$$h_K(x)=\max\{\langle x,y\rangle:y\in K\},\qquad x\in\mathbb{R}^n,$$
where $\langle\cdot,\cdot\rangle$ denotes the standard scalar product in $\mathbb{R}^n$. It is well known (see e.g. \cite{S} or \cite{G}) that $h_K$ is monotone with respect to inclusion (i.e. $h_K\leq h_L$ whenever $K\subseteq L$), additive with respect to Minkowski sum (i.e. $h_{K+L}=h_K+h_L$, where $K+L:=\{x+y:x\in K,y\in L\}$) and positively homogeneous (i.e. $h_{tK}(x)=h_K(tx)=th_K(x)$, for all $t>0$ and $x\in\mathbb{R}^n$). Moreover, if $o\in K$ and $u\in \mathbb{S}^{n-1}$, then $h_K(u)$ equals the distance of the origin to the supporting hyperplane of $K$, whose outer unit normal vector is $u$. Here, $\mathbb{S}^{n-1}=\{x\in\mathbb{R}^n:|x|=1\}$ denotes the unit sphere.

The family of all compact convex subsets of $\mathbb{R}^n$ is denoted by ${\cal K}^n$ and the family of all convex bodies (i.e. compact convex sets with non-empty interior) in $\mathbb{R}^n$ is denoted by ${\cal K}_n^n$. Let $H$ be a subspace of $\mathbb{R}^n$. We say that a set $A$ is $H$-symmetric if $A$ is symmetric with respect to $H$. In the case of a set $A$ being $\{o\}$-symmetric, we will simply write ``$A$ is $o$-symmetric". If ${\cal S}(H)$ is the family of all $H$-symmetric subsets of $\mathbb{R}^n$ and ${\cal B}$ is a family of convex sets, define ${\cal B}_H:={\cal B}\cap {\cal S}(H)$. In particular, ${\cal K}^n_{H}={\cal K}^n\cap {\cal S}(H)$ and ${\cal K}^n_{n,H}={\cal K}^n_n\cap {\cal S}(H)$.

Recall the definition of intrinsic volumes in $\mathbb{R}^n$, a re-normalization of the classical notion of quermassintegrals (see e.g. \cite{S} or \cite{G}).
A convenient way to define intrinsic volumes (without the definition of mixed volumes and quermassintegrals) is through Kubota's formula. For $j\in\{1,\dots,n\}$ and $K\in{\cal K}^n$, the $j$-th intrinsic volume $V_j(K)$ of $K$ is defined as
$$V_j(K)=\binom{n}{j}^{-1}\int_{G_{n,j}}Vol_H(K|H)d\sigma_{n,j}(H),$$where $Vol_H(\cdot)$ stands for the volume functional on $H$ and $\sigma_{n,j}$ is the Haar probability measure on the Grassmannian $G_{n,j}$. Hence $V_j(K)>0$ if and only if $\textnormal{dim}K\geq j$. It is immediate to see from the previous formula that $V_j$ is increasing on ${\cal K}^n$, strictly increasing on ${\cal K}_n^n$ and strictly increasing on ${\cal K}^n$ if $j=1$. Moreover, $V_j$ is continuous on ${\cal K}^n$ and translation invariant. Throughout this paper, a set function will be called \textit{continuous} if it is continuous with respect to the Hausdorff metric. Another remarkable property of $V_j$ is that it is independent of the dimension $n$ of the ambient space. Additionally, $V_n$ is just the $n$-dimensional volume, while $V_{n-1}$ and $V_1$ are proportional to the surface area and mean width functionals respectively. Recall that the mean width $W(K)$ of a compact convex set $K$ is defined by
$$W(K)=2\int_{\mathbb{S}^{n-1}}h_K(u)d\sigma(u),$$where $\sigma$ is the rotation invariant probability measure on $\mathbb{S}^{n-1}$.

A \textit{symmetrization} is any operator $\diamondsuit:{\cal B}\to{\cal B}_H$, where ${\cal B}$ is any class of convex sets and $H$ is a subspace of $\mathbb{R}^n$. For our purposes, ${\cal B}$ will always be ${\cal K}^n$ or ${\cal K}^n_n$. $\diamondsuit$ is called \textit{monotonic} (resp. strictly monotonic), if for any $K,L\in{\cal B}$, with $K\subseteq L$ (resp. $K\subsetneqq L$), it holds $\diamondsuit K\subseteq \diamondsuit L$ (resp. $\diamondsuit K\subsetneqq \diamondsuit L$). $\diamondsuit$ is called \textit{idempotent}, if $\diamondsuit\diamondsuit K=\diamondsuit K$, for all $K\in{\cal B}$. $\diamondsuit$ will be called \textit{invariant on $H$-symmetric sets}, if $\diamondsuit K=K$, for all $K\in{\cal B}_H$. Any set of the form $(r(B_2^n\cap H)+x)+s (B_2^n\cap H^\perp)$, where $r,s>0$ and $x\in H$, will be called an $H$-\textit{symmetric spherical cylinder}. We call $\diamondsuit$ \textit{invariant on $H$-symmetric spherical cylinders}, if $\diamondsuit B=B$, for any $H$-symmetric spherical cylinder $B$. Similarly, $\diamondsuit$ is called \textit{invariant under translations orthogonal to $H$ of $H$-symmetric sets}, if for any $K\in{\cal B}_H$, and for any $x\in H^\perp$, it holds $\diamondsuit(K+x)=K$. Let $F:{\cal B}\to \mathbb{R}$ be a set-function. Then, $\diamondsuit$ is called $F$-\textit{preserving}, if $F(\diamondsuit K)=F(K)$, for all $K\in {\cal B}$. Finally, we call a symmetrization $\diamondsuit:{\cal B}\to{\cal B}_H$ \textit{projection invariant} if $(\diamondsuit K)|H=K|H$, for all $K\in{\cal B}$.

Let us describe the two most famous examples of symmetrization. For $H\in G_{n,n-1}$ and $K\in{\cal K}^n$, the Steiner symmetral $S_H(K)$ of $K$ is the convex set obtained by translating any chord of $K$ of the form $K\cap (H^\perp+x)$, $x\in H$, so that it is $H$-symmetric and contains $x$. If we allow $H$ to be of any dimension (less than or equal to $n$), one can define the Minkowski symmetral $M_H(K)$ of $K$ as follows: $M_H(K)=(1/2)(K+K^H)$, where $K^H$ is the reflection of $K$ with respect to $H$. It is well known (see again \cite{S} or \cite{G}) that Steiner and Minkowski symmetrization are both monotonic, idempotent, invariant on $H$-symmetric spherical cylinders, invariant under translations orthogonal to $H$ of $H$-symmetric sets and projection invariant. Moreover, Steiner symmetrization is $V_n$-preserving and Minkowski symmetrization is $V_1$-preserving. Several important characterizations of Steiner and Minkowski symmetrization, by (some of) the properties mentioned above, where given in \cite{Bi-Ga-Gr}.

Since Steiner and Minkowski symmetrization possess all these nice properties described previously, it is natural to ask if there are other symmetrization operators that behave like them or if some of these properties imply the others. We are now ready to state the questions (which are of this flavor) asked in \cite{Bi-Ga-Gr} and, at the same time, describe the main results of this note.	
\begin{problem}\label{problm-1}
Let $i\in\{0,\dots,n-1\}$, $j\in\{2,\dots,n-1\}$, $H\in G_{n,i}$, ${\cal B}\in\{{\cal K}_n^n,{\cal K}^n\}$. Is there a symmetrization $\diamondsuit:{\cal B}\to{\cal B}_H$ which is monotonic, $V_j$-preserving and invariant on $H$-symmetric sets?
\end{problem}
In Section \ref{s-V_j}, we show that there is no such symmetrization, provided that $j\leq n-i$. Nevertheless, the general problem remains open, with the case $i=n-1$ being (in our opinion) the most interesting. It should be remarked that the authors in \cite{Bi-Ga-Gr} asked, in the case $i=n-1$, other variants of Problem \ref{problm-1} as well; namely what happens if $\diamondsuit$ is assumed to be invariant on $H$-symmetric spherical cylinders or projection invariant instead of invariant on $H$-symmetric sets.
\begin{problem}\label{problm-2}Let $i\in\{1,\dots,n-1\}$, $H\in G_{n,i}$, ${\cal B}\in\{{\cal K}_n^n,{\cal K}^n\}$ and $\diamondsuit:{\cal B}\to{\cal B}_H$ a strictly monotonic and idempotent symmetrization, which is invariant on $H$-symmetric spherical cylinders. Is it true that $\diamondsuit$ is projection invariant?
\end{problem}
\begin{problem}\label{problm-3}Let $i\in\{1,\dots,n-1\}$, $H\in G_{n,i}$, ${\cal B}\in\{{\cal K}_n^n,{\cal K}^n\}$ and $\diamondsuit:{\cal B}\to{\cal B}_H$ a monotonic symmetrization, which is invariant on $H$-symmetric spherical cylinders. Assume, furthermore, that there exists a strictly monotonic set-function $F:{\cal B}\to [0,\infty]$, such that $\diamondsuit$ is $F$-preserving. Is it true that $\diamondsuit$ is projection invariant?
\end{problem}
In Section \ref{s-sph.cyl.}, we show that the answer to both Problems \ref{problm-2} and \ref{problm-3} is affirmative. We remark that the cases $i=1$ and $i=n-1$ were already settled in \cite{Bi-Ga-Gr}.
\begin{problem}\label{problm-4}Let $i\in\{0,\dots,n-1\}$, $H\in G_{n,i}$, ${\cal B}\in\{{\cal K}_n^n,{\cal K}^n\}$ and $\diamondsuit:{\cal B}\to{\cal B}_H$ a monotonic and $V_1$-preserving symmetrization, which is invariant on $H$-symmetric sets. Is it true that $\diamondsuit$ is invariant under translations orthogonal to $H$ of $H$-symmetric sets?
\end{problem}
We note that if one could prove that the answer to the previous question is affirmative, then it would follow that there is no symmetrization, other than the Minkowski symmetrization, satisfying the assumptions of Problem \ref{problm-4} (see Section \ref{s-V_1} below or \cite{Bi-Ga-Gr}). We are unable to solve Problem \ref{problm-4}. We show in Section \ref{s-V_1}, however, that if ${\cal B}={\cal K}^n$ and we assume, furthermore, that $\diamondsuit$ maps line segments, contained in a translate of $H^\perp$, to line segments (see Definition \ref{def-segment-property} and Theorem \ref{thm3} below for the extension to the case ${\cal B}={\cal K}_n^n$), then the answer to this question is indeed affirmative. We should mention though that, as discussed in \cite{girls}, an assumption of the form ``segments are mapped to segments" is a rather strong one.
\section{The natural extension}\label{s-n.e.}
\hspace*{1.5em}We will make use of the following definition in Sections \ref{s-V_j} and \ref{s-V_1}.
\begin{definition}\label{def-natural-extension}
Let $n\in\mathbb{N}$, $i\in\{0,\dots ,n-1\}$, $H\in G _{n,i}$ and $\diamondsuit:{\cal K}_n^n\to {\cal K}^n_{n,H}$ an $H$-symmetrization. The {\it natural extension} $\overline{\diamondsuit}:{\cal K}^n\to {\cal K}_H^n$ of $\diamondsuit$ is defined as follows:$$\overline{\diamondsuit}K=\bigcap _{m=1}^{\infty}\diamondsuit\left(K+\frac{1}{m}B_2^n\right),\qquad K\in{\cal K}^n.$$
\end{definition}
It should be remarked that the restriction of $\overline{\diamondsuit}$ onto ${\cal K}_n^n$ does not need to coincide with $\diamondsuit$ (even under the assumption of strict monotonicity) unless certain additional  properties are assumed for $\diamondsuit$. For instance, for $K\in{\cal K}_n^n$, set $B_K$ to be the $o$-symmetric Euclidean ball of volume $V_n(K)$ and define
$$\diamondsuit K:=\begin{cases}
B_K, & V_n(K)\leq 1\\
2B_K, & V_n(K)>1.
\end{cases}$$
Then, for any $i\in\{0,\dots,n-1\}$ and any $H\in G_{n,i}$, $\diamondsuit:{\cal K}_n^n\to {\cal K}_{n,H}^n$ is a strictly monotonic symmetrization, with $\overline{\diamondsuit}K=2\diamondsuit K\neq \diamondsuit K$, if $V_n(K)=1$.

The next lemma summarizes some basic information concerning the natural extension operator.
\begin{lemma}\label{lemma-natural-extension}
Let $i\in\{0,\dots ,n-1\}$, $H\in G _{n,i}$, $\diamondsuit:{\cal K}_n^n\to {\cal K}^n_{n,H}$ an $H$-symmetrization and $\overline{\diamondsuit}:{\cal K}^n\to {\cal K}_H^n$ be its natural extension.
Assume that $\diamondsuit$ is monotonic.
Then, the following hold true:
\begin{enumerate}[i)]
\item $\overline{\diamondsuit}K\supseteq \diamondsuit K$, for all $K\in{\cal K}^n_n$. Moreover, if there exists a strictly monotonic and continuous set function $F:{\cal K}^n_n\to{\cal K}^n_{n,H}$ such that $\diamondsuit$ is $F$-preserving, then $\overline{\diamondsuit}K= \diamondsuit K$, for all $K\in{\cal K}^n_n$.
\item If $\diamondsuit$ is $V_j$-preserving, for some $j\in\{1,\dots ,n\}$, then $\overline{\diamondsuit}$ is $V_j$-preserving.
\item $\diamondsuit$ is projection invariant if and only if $\overline{\diamondsuit}$ is projection invariant.
\item If $\diamondsuit$ is invariant on $H$-symmetric sets, then $\overline{\diamondsuit}$ is invariant on $H$-symmetric sets.
\item If $\diamondsuit$ is invariant on $H$-symmetric spherical cylinders, then $\overline{\diamondsuit}$ is invariant on $H$-symmetric spherical cylinders.
\item If $\diamondsuit$ is invariant under translations orthogonal to $H$ of $H$-symmetric sets, then $\overline{\diamondsuit}$ is invariant under translations orthogonal to $H$ of $H$-symmetric sets. Moreover, if $\overline{\diamondsuit}$ is invariant under translations orthogonal to $H$ of $H$-symmetric sets, then $\diamondsuit (K+x)\supseteq K$, for all $H$-symmetric sets $K$ and for all $x\in H^{\perp}$. If, in addition, $\diamondsuit$ is $V_j$-preserving, for some $j\in\{1,\dots, n\}$, then $\diamondsuit$ is invariant under translations orthogonal to $H$ of $H$-symmetric sets.
\end{enumerate}

\end{lemma}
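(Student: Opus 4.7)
The plan is to observe that monotonicity of $\diamondsuit$ makes $\{\diamondsuit(K + \frac{1}{m}B_2^n)\}_{m \geq 1}$ a nested decreasing sequence of $H$-symmetric convex bodies, hence it converges in the Hausdorff metric to its intersection $\overline{\diamondsuit}K$, which is therefore a non-empty compact convex $H$-symmetric set (so $\overline{\diamondsuit}$ is well-defined). This Hausdorff convergence, combined with continuity of the relevant set functions and of orthogonal projection onto $H$, will drive most of the lemma.

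For (i), the inclusion $\overline{\diamondsuit}K \supseteq \diamondsuit K$ on $\mathcal{K}_n^n$ follows by applying $\diamondsuit$ to $K \subseteq K + \frac{1}{m}B_2^n$ and intersecting. Under the extra hypothesis, continuity of $F$ gives $F(\overline{\diamondsuit}K) = \lim_m F(K + \frac{1}{m}B_2^n) = F(K) = F(\diamondsuit K)$, so strict monotonicity of $F$ promotes this inclusion to equality. Part (ii) is immediate from continuity of $V_j$ and the same Hausdorff convergence. Part (iv), and the forward direction of (vi), are handled by noting that $K + \frac{1}{m}B_2^n$ is $H$-symmetric whenever $K$ is (and may be translated by $x \in H^\perp$); the invariance hypothesis applies to each term and the intersection collapses to $K$. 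For (v), since a cylinder plus a ball is not a cylinder, I would sandwich $C = rB_H + x + sB_{H^\perp}$ between the nearby cylinders $C_\epsilon^{\pm} = (r \pm \epsilon)B_H + x + (s \pm \epsilon)B_{H^\perp}$, exploiting the elementary inclusion $B_2^n \subseteq B_H + B_{H^\perp}$ (where $B_H := B_2^n \cap H$) to get $C + \frac{1}{m}B_2^n \subseteq C_{1/m}^{+}$. Monotonicity together with invariance on cylinders then yields $C_\epsilon^{-} \subseteq \overline{\diamondsuit}C \subseteq C_{1/m}^{+}$, and letting the parameters tend to $0$ closes the argument.

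The more subtle parts are the backward direction of (iii) and the two ``moreover'' statements in (vi), where information about $\overline{\diamondsuit}$ must be pushed back to $\diamondsuit$ on $\mathcal{K}_n^n$. The key device is the inner parallel body $L_m := K \ominus \frac{1}{m}B_2^n$: for $m$ large it is a convex body, it inherits the $H$-symmetry of $K$, and it satisfies $L_m + \frac{1}{m}B_2^n \subseteq K$. Monotonicity then gives
\[
\overline{\diamondsuit}L_m \;\subseteq\; \diamondsuit\!\left(L_m + \tfrac{1}{m}B_2^n\right) \;\subseteq\; \diamondsuit K
\]
(and analogously for $K + x$). Invoking the assumed property of $\overline{\diamondsuit}$ on $L_m$ (or $L_m + x$) replaces $\overline{\diamondsuit}L_m$ by $L_m$ in (vi), or, after projection, by $L_m | H$ in (iii). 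Letting $m \to \infty$ and using $L_m \to K$ in the Hausdorff metric together with closedness of $\diamondsuit K$ produces $K \subseteq \diamondsuit(K+x)$ for (vi), and $K|H \subseteq \diamondsuit K|H$ for (iii); for (iii) the reverse inclusion is furnished by part (i) combined with the assumed projection invariance of $\overline{\diamondsuit}$. Finally, for the last sentence of (vi), the inclusion $\diamondsuit(K+x) \supseteq K$ together with $V_j$-preservation and translation invariance of $V_j$ gives $V_j(\diamondsuit(K+x)) = V_j(K)$, and strict monotonicity of $V_j$ on $\mathcal{K}_n^n$ forces equality of sets.

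The main obstacle I anticipate is precisely this last transfer step: setting up the inner parallel body approximation with the scales matched so that $L_m + \frac{1}{m}B_2^n$ sits inside $K$ and so that the relevant property of $\overline{\diamondsuit}$ can be applied to $L_m$ (or $L_m + x$) directly. Once that inclusion is in place, the remaining work is standard Hausdorff-continuity bookkeeping.
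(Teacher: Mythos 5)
Your argument is correct, and for parts (i), (ii), (iv), (v) it is essentially the paper's proof. Where you genuinely diverge is in the two ``push information from $\overline{\diamondsuit}$ back to $\diamondsuit$'' steps, namely the backward direction of (iii) and the middle claim of (vi). There you use inner parallel bodies $L_m = K \ominus \frac{1}{m}B_2^n$, obtaining $\overline{\diamondsuit}(L_m) \subseteq \diamondsuit(L_m + \frac{1}{m}B_2^n) \subseteq \diamondsuit K$ and then letting $m\to\infty$. The paper instead, for (iii), fixes a single interior point $y\in \textnormal{int}K$ and uses the degenerate body $\{y\}$ with the shrinking balls $y + \frac{1}{m}B_2^n \subseteq K$; this gives $y|H \in (\diamondsuit K)|H$ for each interior $y$ and then closes up. Your version handles all interior points simultaneously and is arguably tidier, at the small cost of requiring the (standard) fact that $\bigcup_m L_m|H$ has closure $K|H$. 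For (vi), the paper's route is actually shorter than yours: it applies part (i) directly to $K+x$ to get $K = \overline{\diamondsuit}(K+x) \supseteq \diamondsuit(K+x)$, i.e.\ the inclusion in the \emph{opposite} direction from the one printed in the lemma (there is a small discrepancy between the lemma's displayed inclusion $\diamondsuit(K+x)\supseteq K$ and the paper's own derivation $\diamondsuit(K+x)\subseteq K$). Your inner-parallel-body argument proves the inclusion exactly as stated; both inclusions are in fact true, and either one, together with $V_j$-preservation, translation invariance of $V_j$, and strict monotonicity of $V_j$ on $\mathcal{K}_n^n$, yields $\diamondsuit(K+x) = K$ for the final sentence of (vi). One minor remark on your (v): the lower bound $C_\epsilon^- \subseteq \overline{\diamondsuit}C$ is unnecessary; part (i) already gives $\overline{\diamondsuit}C \supseteq \diamondsuit C = C$ directly, which is what the paper uses.
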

\begin{proof}
(i) The fact that $\overline{\diamondsuit}K$ contains $\diamondsuit K$, for $K\in{\cal K}^n_n$ is trivial. Assume that there exists a continuous set function $F:{\cal K}^n_n\to{\cal K}^n_{n,H}$ such that $\diamondsuit$ is $F$-preserving. Then, by the monotonicity of $F$, $F(\overline{\diamondsuit}K)\geq F(\diamondsuit K)=F(K)$. Moreover, the sequence $\left\{K+\frac{1}{m}B_2^n\right\}$ converges to $K$, with respect to the Hausdorff metric, thus $$F(\overline{\diamondsuit}K)\leq F\left(K+\frac{1}{m}B_2^n\right)\to F(K),\qquad \textnormal{as }m\to \infty,$$
hence $F(\overline{\diamondsuit}K)=F(\diamondsuit K)$. Since $F$ is strictly monotonic, it follows that $\overline{\diamondsuit}K=\diamondsuit K$.\\
(ii) As noted in the Introduction, $V_j$ is continuous in ${\cal K}^n$. Fix $K\in {\cal K}^n$. Notice that the sequence $\left\{K+\frac{1}{m}B_2^n\right\}_{m=1}^{\infty}$ is decreasing, therefore by the monotonicity of $\diamondsuit$, $\left\{\diamondsuit\left(K+\frac{1}{m}B_2^n\right)\right\}_{m=1}^{\infty}$ is also decreasing and consequently $\diamondsuit\left(K+\frac{1}{m}B_2^n\right)\to \overline{\diamondsuit}K$, as $m\to \infty$. Thus, as before, we have:
$$V_j(\overline{\diamondsuit}K)=\lim_{m\to\infty}V_j\left(K+\frac{1}{m}B_2^n\right)=V_j(K).$$
(iii) First assume that $\diamondsuit $ is projection invariant. Let $K\in {\cal K}^n$. Then, for every $m\in \mathbb{N}$,
$$\left(\overline{\diamondsuit}K\right)|H \subseteq\left(\diamondsuit\left(K+\frac{1}{m}B_2^n\right)\right)|H=\left(K+\frac{1}{m}B_2^n\right)|H=(K|H)+\frac{1}{m}(B_2^n|H)\xrightarrow{m\to\infty} K|H.$$
On the other hand, by $(i)$, we have $\overline{\diamondsuit}K\supseteq \diamondsuit K$, thus $\left(\overline{\diamondsuit}K\right)|H\supseteq \left(\diamondsuit K\right)|H$, which shows that $\left(\overline{\diamondsuit}K\right)|H= \left(\diamondsuit K\right)|H$. Conversely, assume that $\overline{\diamondsuit}$ is projection invariant. Let $K\in{\cal K}_n^n$. We already know that $K|H=\left(\overline{\diamondsuit}K\right)|H\supseteq \left(\diamondsuit K\right)|H$. To prove the inverse inclusion, fix $y\in \textnormal{int}K$. Since $(\diamondsuit K)|H$ is assumed to be a closed set, it suffices to show that $y|H\in( \diamondsuit  K)|H$. By assumption, $\overline{\diamondsuit}\{y\}|H=\{y\}|H$, thus $y|H\in \diamondsuit\left(\{y\}+\frac{1}{m}B_2^n\right)=\diamondsuit\left(\frac{1}{m}B_2^n+y\right)$, for all $m\in \mathbb{N}$. On the other hand, since $y\in \textnormal{int}K$, there exists $m_0\in\mathbb{N}$, such that $\frac{1}{m}B_2^n+y\subseteq K$, for all $m\geq m_0$. Thus,
$$(\diamondsuit K)|H\supseteq\bigcap_{m=m_0}^{\infty}\left(\diamondsuit\left(\frac{1}{m}B_2^n+y\right)|H\right)=\left(\bigcap_{m=1}^{\infty}\left(\frac{1}{m}B_2^n+y\right)\right)|H=\{y|H\},$$
as asserted.\\
(iv) Let $K\in {\cal K}^n$ be an $H$-symmetric set. Then, $K+\frac{1}{m}B_2^n$ is also $H$-symmetric, thus
$$\overline{\diamondsuit}K=\bigcap _{m=1}^{\infty}\diamondsuit\left(K+\frac{1}{m}B_2^n\right)=\bigcap _{m=1}^{\infty}\left(K+\frac{1}{m}B_2^n\right)=K.$$
(v) Let $m\in\mathbb{N}$, $s,t>0$ and $x\in H$. Then, by $(i)$ and the assumption, we have
\begin{eqnarray*}\overline{\diamondsuit}\left(\big(s(B_2^n\cap H)+t(B_2^n\cap H^{\perp})\big)+x\right)&\supseteq& \diamondsuit\left(\big(s(B_2^n\cap H)+t(B_2^n\cap H^{\perp})\big)+x\right)\\
&=&\left(\big(s(B_2^n\cap H)+t(B_2^n\cap H^{\perp})\big)+x\right).
\end{eqnarray*}
Moreover,
$$\left(\big(s(B_2^n\cap H)+t(B_2^n\cap H^{\perp})\big)+x\right)+\frac{1}{m}B_2^n\subseteq \left(\frac{ms+1}{m}(B_2^n\cap H)+\frac{mt+1}{m}(B_2^n\cap H^{\perp})\right)+x,$$
thus\\$\overline{\diamondsuit}\left(\big(s(B_2^n\cap H)+t(B_2^n\cap H^{\perp})\big)+x\right)$
\begin{eqnarray*}
&\subseteq& \bigcap_{m=1}^{\infty}\diamondsuit\left(\left(\frac{ms+1}{m}(B_2^n\cap H)+\frac{mt+1}{m}(B_2^n\cap H^{\perp})\right)+x\right)\\
&=&\bigcap_{m=1}^{\infty}\left(\left(\frac{ms+1}{m}(B_2^n\cap H)+\frac{mt+1}{m}(B_2^n\cap H^{\perp})\right)+x\right)\\
&\xrightarrow{m\to\infty}&\big(s(B_2^n\cap H)+t(B_2^n\cap H^{\perp})\big)+x.
\end{eqnarray*}
(vi) Fix an $H$-symmetric set $K\in {\cal K}_n^n$ and $x\in H^{\perp}$. If $\diamondsuit$ is invariant under translations orthogonal to $H$ of $H$-symmetric sets, we have:
$$\overline{\diamondsuit} (K+x)=\bigcap_{m=1}^{\infty}\diamondsuit \left(K+\frac{1}{m}B_2^n+x\right)=\bigcap_{m=1}^{\infty} \left(K+\frac{1}{m}B_2^n\right)=K,$$thus $\overline{\diamondsuit}$ is invariant under translations orthogonal to $H$ of $H$-symmetric sets. Conversely, if  $\overline{\diamondsuit}$ is invariant under translations orthogonal to $H$ of $H$-symmetric sets, then by $(i)$, we have: $K=\overline{\diamondsuit}(K+x)\supseteq \diamondsuit(K+x)$. If, in addition, $\diamondsuit$ is $V_j$-preserving, then $V_j(\diamondsuit(K+x))=V_j(K+x)=V_j(K)$, which shows that $\diamondsuit(K+x)=K$. Therefore, $\diamondsuit$ is invariant under translations orthogonal to $H$ of $H$-symmetric sets.
\end{proof}
\section{Symmetrization operators invariant on $H$-symmetric spherical cylinders}\label{s-sph.cyl.}
\hspace*{1.5em}This section is devoted to the presentation of the solution of Problems \ref{problm-2} and \ref{problm-3} .
\begin{theorem}\label{thm1}
Let $i\in\{1,\dots, n-1\}$, $H\in G _{n,i}$ and $\diamondsuit:{\cal B}\to {\cal B}_H$ a symmetrization, where ${\cal B}={\cal K}^n $or ${\cal K}_n^n$. If $\diamondsuit$ is strictly monotonic, idempotent and invariant on $H$-symmetric spherical cylinders, then $\diamondsuit$ is projection invariant.
\end{theorem}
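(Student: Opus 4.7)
The plan is to prove the two inclusions $(\diamondsuit K)|H\subseteq K|H$ and $K|H\subseteq(\diamondsuit K)|H$ separately.

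For the easy direction $(\diamondsuit K)|H\subseteq K|H$, I would invoke the standard fact that every compact convex subset of $H$ equals the intersection of the closed balls of $H$ that contain it. Hence it suffices to prove $(\diamondsuit K)|H\subseteq B$ for every closed ball $B\subseteq H$ with $K|H\subseteq B$. Given such a $B$, choose $s>0$ large enough that the spherical cylinder $C:=B+s(B_2^n\cap H^\perp)$ contains $K$. Monotonicity of $\diamondsuit$, together with its invariance on $H$-symmetric spherical cylinders, then yields $\diamondsuit K\subseteq\diamondsuit C=C$, whence $(\diamondsuit K)|H\subseteq C|H=B$. Intersecting over all such $B$ completes this direction.

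For the harder inclusion $K|H\subseteq(\diamondsuit K)|H$, I plan to argue by contradiction, using both strict monotonicity and idempotence. Suppose there exists $x_0\in K|H\setminus(\diamondsuit K)|H$. Since any compact convex set equals the intersection of the closed balls containing it, I can find a closed ball $B\subseteq H$ with $(\diamondsuit K)|H\subseteq B$ and $x_0\notin B$ (obtained by taking a sufficiently large ball tangent from inside to a hyperplane of $H$ that separates $x_0$ from $(\diamondsuit K)|H$). Choosing $s>0$ large enough that the spherical cylinder $C:=B+s(B_2^n\cap H^\perp)$ contains $\diamondsuit K$, I obtain $\diamondsuit K\subseteq C$ yet $K\not\subseteq C$ (since $x_0\in K|H\setminus B$). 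Setting $L:=K\cap C$, the proper inclusion $L\subsetneq K$ combined with strict monotonicity gives $\diamondsuit L\subsetneq\diamondsuit K$. The contradiction should come from establishing the reverse inclusion $\diamondsuit K\subseteq\diamondsuit L$, for which the natural route is idempotence: since $\diamondsuit K=\diamondsuit\diamondsuit K$ is trapped by every spherical cylinder containing it and each such cylinder also contains $L$, a careful exploitation of monotonicity should force $\diamondsuit K\subseteq\diamondsuit L$.

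I expect the main obstacle to be precisely this last step. Invariance is only assumed on $H$-symmetric spherical cylinders and not on all $H$-symmetric convex sets, so $\diamondsuit K$ is not itself known to be a fixed point of the broader family of $H$-symmetric sets, and the bare inclusion $\diamondsuit K\subseteq C$ does not automatically translate into a comparison between $\diamondsuit K$ and $\diamondsuit L$. Bridging this gap will likely require an approximation argument combining idempotence with strict monotonicity applied to intersections of carefully chosen families of containing spherical cylinders, in the spirit of the proofs of the special cases $i=1$ and $i=n-1$ in \cite{Bi-Ga-Gr}; the generalization to arbitrary codimension $n-i$ should hinge on showing that the $H$-symmetric spherical cylinders are a sufficiently rich ``test family'' to separate the symmetrizations $\diamondsuit K$ and $\diamondsuit(K\cap C)$.
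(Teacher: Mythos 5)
Your first inclusion is fine and correct as written: approximating $K|H$ from outside by balls $B\subseteq H$, lifting to spherical cylinders $C=B+s(B_2^n\cap H^\perp)\supseteq K$, and using $\diamondsuit K\subseteq\diamondsuit C=C$ yields $(\diamondsuit K)|H\subseteq K|H$ using only monotonicity and invariance on spherical cylinders. (Note that this is not the route the paper takes: the paper passes through ``sets of special form'' $L+s(B_2^n\cap H^\perp)$, with $L$ an arbitrary convex body in $H$, and must first establish $\diamondsuit T\subseteq T$ for such $T$; your intersection-over-balls trick sidesteps that.) The genuine gap is in your second inclusion, which you acknowledge you cannot close. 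That contradiction scheme is not going to work as sketched: from $L:=K\cap C\subsetneq K$ and strict monotonicity you get $\diamondsuit L\subsetneq\diamondsuit K$, but there is no mechanism in the hypotheses to force $\diamondsuit K\subseteq\diamondsuit L$, since $\diamondsuit K$ is not assumed to be a fixed point and idempotence only controls $\diamondsuit\diamondsuit K$, not $\diamondsuit L$.

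The missing idea is that $K|H\subseteq(\diamondsuit K)|H$ is \emph{also} an easy direction, provable by approximating from the inside rather than by contradiction, and it again uses only monotonicity and invariance on spherical cylinders (this is Lemma~\ref{lemma-projection-of-symmetrization-contains-projection} of the paper). Concretely, for $K\in{\cal K}_n^n$ and $x\in\mathrm{relint}(K|H)$, choose $y\in H^\perp$ and small $\delta>0$ so that the small cylinder $C_\delta:=\delta(B_2^n\cap H)+\delta(B_2^n\cap H^\perp)+x+y$ lies inside $K$; then $C_\delta$ is contained in the $H$-symmetric spherical cylinder $D_\delta:=\delta(B_2^n\cap H)+t(B_2^n\cap H^\perp)+x$ for $t$ large. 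Monotonicity and cylinder-invariance give $\emptyset\neq\diamondsuit C_\delta\subseteq D_\delta$ and $\diamondsuit C_\delta\subseteq\diamondsuit K$, so $(\diamondsuit K)|H$ meets $\delta(B_2^n\cap H)+x$; letting $\delta\to0$ and using closedness gives $x\in(\diamondsuit K)|H$, hence the inclusion after taking closures. This is the opposite orientation from the paper's use of idempotence: the paper spends strict monotonicity and idempotence on the inclusion $(\diamondsuit K)|H\subseteq K|H$ (showing $\diamondsuit T\subseteq T$ on sets of special form), whereas your outer-ball argument already handles that inclusion without them; you then misallocate these hypotheses to the $\supseteq$ direction, where they are neither needed nor effective in the way you propose. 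Replacing your contradiction attempt with the inner approximation above completes the proof.
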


\begin{theorem}\label{thm2}
Let $i\in\{1,\dots, n-1\}$, $H\in G _{n,i}$ and $\diamondsuit:{\cal B}\to {\cal B}_H$ a symmetrization, where ${\cal B}={\cal K}^n $or ${\cal K}_n^n$.
Assume that there exists a strictly increasing set function $F:{\cal B}\to [0,\infty)$ such that $\diamondsuit$ is $F$-preserving. If $\diamondsuit$ is monotonic and invariant on $H$-symmetric spherical cylinders, then $\diamondsuit$ is projection invariant.
\end{theorem}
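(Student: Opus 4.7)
The plan is to prove projection invariance in two stages: first for $H$-symmetric convex bodies, then for general $K$. A preliminary observation is that the hypotheses force $\diamondsuit$ to be strictly monotonic: if $K\subsetneq K'$ in $\mathcal{B}$, strict monotonicity of $F$ and $F$-preservation of $\diamondsuit$ give $F(\diamondsuit K)=F(K)<F(K')=F(\diamondsuit K')$, so $\diamondsuit K\neq\diamondsuit K'$; combined with $\diamondsuit K\subseteq\diamondsuit K'$ from monotonicity, this upgrades to $\diamondsuit K\subsetneq\diamondsuit K'$.

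The inclusion $(\diamondsuit K)|H\subseteq K|H$ follows for every $K$ from monotonicity and cylinder invariance alone. For every closed Euclidean ball $B\subseteq H$ containing $K|H$, pick $s>0$ large enough that $K\subseteq C:=B+s(B_2^n\cap H^\perp)$; then $\diamondsuit K\subseteq\diamondsuit C=C$ and so $(\diamondsuit K)|H\subseteq B$. Intersecting over all such $B$ yields $(\diamondsuit K)|H\subseteq K|H$, since any closed convex subset of $H$ is the intersection of the Euclidean balls in $H$ that contain it.

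For the reverse inclusion when $K$ is $H$-symmetric, I would first assume $K\in\mathcal{K}_n^n$ and inscribe small cylinders in $K$. For any $x_0$ in the relative interior of $K|H$, the $H$-symmetry of $K$ makes the cross-section $K\cap(x_0+H^\perp)$ symmetric about $x_0$, and full-dimensionality of $K$ makes it contain a ball of positive radius around $x_0$ in $x_0+H^\perp$. A uniform continuity argument then yields $r,s>0$ with $(r(B_2^n\cap H)+x_0)+s(B_2^n\cap H^\perp)\subseteq K$; by monotonicity and cylinder invariance this cylinder lies inside $\diamondsuit K$, so $x_0\in(\diamondsuit K)|H$. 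Taking closure gives $K|H\subseteq(\diamondsuit K)|H$. The case $\mathcal{B}=\mathcal{K}^n$ and lower-dimensional $K$ would then be handled by approximation by $K+\tfrac{1}{m}B_2^n$ together with Lemma \ref{lemma-natural-extension}.

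For non-$H$-symmetric $K$, I would argue by contradiction. If $(\diamondsuit K)|H\subsetneq K|H$, separate a point of $K|H\setminus(\diamondsuit K)|H$ from $(\diamondsuit K)|H$ by a closed Euclidean ball $B\subseteq H$ containing $(\diamondsuit K)|H$ but omitting the separating point, and take $s$ large enough that the cylinder $C=B+s(B_2^n\cap H^\perp)$ contains $\diamondsuit K$ but not $K$. Then $K\cap C\subsetneq K$ and strict monotonicity of $\diamondsuit$ forces $\diamondsuit(K\cap C)\subsetneq\diamondsuit K$. The contradiction would then be extracted by applying the already-proven $H$-symmetric case to the $H$-symmetric hull $\mathrm{conv}((K\cap C)\cup(K\cap C)^H)$ and chaining the resulting $F$-preservation identities with strict monotonicity of $F$, possibly iterating over a decreasing sequence of such cylinders.

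The main obstacle is precisely this last step, especially in the configuration where $K$ is disjoint from $H$ so that no spherical cylinder fits inside $K$ and the inscribed-cylinder route of the $H$-symmetric case is unavailable. Extracting the contradiction from the delicate interplay of $F$-preservation, strict monotonicity of $F$, and cylinder invariance is the substantive content beyond the cases $i\in\{1,n-1\}$ already handled in \cite{Bi-Ga-Gr}.
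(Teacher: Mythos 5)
Your Stage 1, establishing $(\diamondsuit K)|H\subseteq K|H$ by intersecting over cylinders $B+s(B_2^n\cap H^\perp)$ with $B$ a Euclidean ball in $H$ containing $K|H$, is correct and is a genuinely different route from the paper. The paper instead wraps $K$ in a single ``set of special form'' $T=(K|H)+s(B_2^n\cap H^\perp)$, which is \emph{not} in general a spherical cylinder, and therefore must separately prove $\diamondsuit T\subseteq T$ using the inscription lemma $T\subseteq\diamondsuit T$ (Lemma \ref{lemma-sets-of-special-form-1}) together with $F$-preservation (this is where $F$ enters the paper's argument). Your ball-intersection trick sidesteps this entirely: since each $B+s(B_2^n\cap H^\perp)$ is an honest spherical cylinder, cylinder invariance alone pins $\diamondsuit K$ inside it, and the fact that every compact convex subset of $H$ is the intersection of the Euclidean balls containing it finishes the job. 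That is slicker.

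The genuine gap is in Stage 2 and the final paragraph. Your inscribed-cylinder argument gives $(\diamondsuit K)|H\supseteq K|H$ only when $K$ is $H$-symmetric and full-dimensional, because you need a spherical cylinder \emph{centered on $H$} to fit inside $K$. For $K$ disjoint from $H$ no such cylinder exists, and you then fall back on a ``separation plus strict monotonicity of $F$ plus $H$-symmetric hull'' contradiction scheme that you yourself flag as the main obstacle; as written it does not close. The missing idea is the paper's Lemma \ref{lemma-projection-of-symmetrization-contains-projection}, which uses no $F$ at all: for $x\in\textnormal{relint}(K|H)$ there are $y\in H^\perp$ and $\delta_0>0$ with the translated cylinder $C(\delta,x)+y:=\delta(B_2^n\cap H)+\delta(B_2^n\cap H^\perp)+x+y\subseteq K$ for $0<\delta<\delta_0$. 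This translate is \emph{not} $H$-symmetric, so you cannot apply cylinder invariance to it directly; instead wrap it in the thin but very tall $H$-symmetric cylinder $\delta(B_2^n\cap H)+t(B_2^n\cap H^\perp)+x$ (with $t$ large enough to absorb the translation $y$). Monotonicity and cylinder invariance give $\bigl(\diamondsuit(C(\delta,x)+y)\bigr)|H\subseteq\delta(B_2^n\cap H)+x$, and since $\diamondsuit(C(\delta,x)+y)$ is nonempty and contained in $\diamondsuit K$, $(\diamondsuit K)|H$ meets every ball $\delta(B_2^n\cap H)+x$; letting $\delta\to 0$ and using closedness yields $x\in(\diamondsuit K)|H$. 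The point you missed is that the translation $y\in H^\perp$ is irrelevant once you wrap in a cylinder that is arbitrarily long in the $H^\perp$ direction. With this lemma in place of your last paragraph (together with Lemma \ref{lemma-reduction-to-1case} to dispose of lower-dimensional $K$), your proof closes.
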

To avoid distinguishing two cases: ${\cal B}={\cal K}_n^n$ or ${\cal B}={\cal K}^n$, we will consider a slightly more general map, namely $\diamondsuit:{\cal K}_n^n\to {\cal K}^n_H$. The following lemma shows that it suffices to prove Theorems \ref{thm1} and \ref{thm2} with ${\cal B}={\cal K}_n^n$ and with ${\cal K}_H^n$ in the place of ${\cal B}_H$.
\begin{lemma}\label{lemma-reduction-to-1case}
Let $H$ be a proper subspace of $\mathbb{R}^n$ and $\diamondsuit:{\cal K}^n\to {\cal K}^n_H$ a monotonic map. If the restriction $\diamondsuit|_{{\cal K}_n^n}$ of $\diamondsuit$ onto the family ${\cal K}_n^n$ is projection invariant, then $\diamondsuit$ is projection invariant.
\end{lemma}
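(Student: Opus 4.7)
The plan is to prove the two inclusions $(\diamondsuit K)|H\subseteq K|H$ and $(\diamondsuit K)|H\supseteq K|H$ separately for an arbitrary $K\in\mathcal{K}^n$, the first by a straightforward outer-approximation argument, the second by a slick reduction to the singleton case.

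First I would handle the easy containment. Set $K_m:=K+\frac{1}{m}B_2^n$. Since $B_2^n$ has non-empty interior, $K_m\in\mathcal{K}_n^n$, so by hypothesis $(\diamondsuit K_m)|H=K_m|H=K|H+\frac{1}{m}(B_2^n\cap H)$. Monotonicity applied to $K\subseteq K_m$ gives $\diamondsuit K\subseteq\diamondsuit K_m$, and projecting yields
\[(\diamondsuit K)|H\subseteq K|H+\tfrac{1}{m}(B_2^n\cap H)\qquad\text{for every }m.\]
Since $K|H$ is closed, intersecting over $m$ produces $(\diamondsuit K)|H\subseteq K|H$.

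The key observation for the reverse direction is that this same argument applies to singletons. For any $y\in\mathbb{R}^n$, one has $\{y\}\in\mathcal{K}^n$ (though not in $\mathcal{K}_n^n$), and the argument just given yields $(\diamondsuit\{y\})|H\subseteq\{y|H\}$. Since $\diamondsuit\{y\}\in\mathcal{K}^n_H$ is non-empty, so is its projection, forcing the equality $(\diamondsuit\{y\})|H=\{y|H\}$. Now for any $y\in K$, the inclusion $\{y\}\subseteq K$ combined with monotonicity gives $\diamondsuit\{y\}\subseteq\diamondsuit K$, and projecting yields $y|H\in(\diamondsuit\{y\})|H\subseteq(\diamondsuit K)|H$. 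Varying $y$ over $K$ gives $K|H\subseteq(\diamondsuit K)|H$, completing the proof.

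There is no real obstacle here; the only subtle point is recognizing that monotonicity, which is usually invoked to provide upper bounds on $\diamondsuit K$, can instead be chained with the singleton-level equality $(\diamondsuit\{y\})|H=\{y|H\}$ to produce a lower bound on $(\diamondsuit K)|H$. Implicit in the argument is the standing convention that elements of $\mathcal{K}^n$ (and hence $\mathcal{K}^n_H$) are non-empty, so that $\diamondsuit\{y\}$ is a non-empty compact convex set; without this, one would need to first rule out $\diamondsuit\{y\}=\emptyset$.
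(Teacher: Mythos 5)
Your proof is correct and follows essentially the same route as the paper's: the key step for the lower bound $(\diamondsuit K)|H\supseteq K|H$ is in both cases the singleton identity $(\diamondsuit\{y\})|H=\{y|H\}$, established via outer approximation by $\tfrac{1}{m}B_2^n+y$ and the non-emptiness of $\diamondsuit\{y\}$, chained with monotonicity. Where you differ is in the upper bound $(\diamondsuit K)|H\subseteq K|H$: the paper asserts there exists a full-dimensional $K'\supseteq P$ with $K'|H=P|H$ and applies projection invariance to $K'$ in a single stroke, whereas you approximate by $K+\tfrac{1}{m}B_2^n$ and intersect over $m$. Your version is actually the more careful one: a body $K'\in{\cal K}_n^n$ with $K'\supseteq P$ and $K'|H=P|H$ exists precisely when $P|H$ is full-dimensional in $H$, and fails, for instance, when $P$ is a segment orthogonal to $H$ with $\dim H\ge 2$, or when $P$ is a point. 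Your limiting argument handles all such degenerate projections uniformly, and indeed the paper itself is forced to resort to exactly this device to treat singletons in the second half of its proof. So your proposal is a mild but genuine streamlining of the paper's argument.
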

\begin{proof}
Let $P\in {\cal K}^n\setminus {\cal K}^n_n$. Clearly, there exists $K\in{\cal K}^n$, such that $K\supseteq P$ and $K|H=P|H$. By the monotonicity of $\diamondsuit$, we get
$$(\diamondsuit P)|H\subseteq (\diamondsuit K)|H=K|H=P|H.$$
To prove the other inclusion, as in Lemma \ref{lemma-natural-extension} $(iii)$, we need to show that $(\diamondsuit \{y\})|H=\{y\}|H$, for any $y\in\mathbb{R}^n$. Since $$(\diamondsuit \{y\})|H\subseteq \diamondsuit\left(\frac{1}{m}B_2^n+y\right)|H=\left(\frac{1}{m}B_2^n+y\right)|H=\frac{1}{m}(B_2^n|H)+(y|H),$$for all $m\in\mathbb{N}$, it follows that
$$(\diamondsuit \{y\})|H\subseteq \bigcap_{m=1}^{\infty}\left(\frac{1}{m}(B_2^n|H)+(y|H)\right)=\{y|H\}.$$But since it is assumed that $\diamondsuit\{y\}\neq \emptyset$, it follows that $(\diamondsuit \{y\})|H=\{y|H\}$, as required.
\end{proof}
We will need some additional lemmas.
\begin{lemma}\label{lemma-projection-of-symmetrization-contains-projection}
Let $H$ be a proper subspace of $\mathbb{R}^n$ and $\diamondsuit:{\cal K}^n\to {\cal K}^n_H$ a monotonic map. If $\diamondsuit$ is invariant on $H$-symmetric spherical cylinders, then $(\diamondsuit K)|H\supseteq K|H$.
\end{lemma}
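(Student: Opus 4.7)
The plan is to first prove the lemma for singletons $K = \{z\}$ and then use monotonicity to propagate the conclusion to arbitrary $K \in \mathcal{K}^n$. The key observation is that the $H$-projection of an $H$-symmetric spherical cylinder $(r(B_2^n \cap H) + x) + s(B_2^n \cap H^\perp)$ is just $x + r(B_2^n \cap H)$, so by choosing the parameter $r$ small one can force the $H$-projection of any $\diamondsuit$-image trapped inside such a cylinder to collapse to a single point.

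For the singleton step, I would take $z \in \mathbb{R}^n$, decompose it as $z = z_1 + z_2$ with $z_1 = z|H \in H$ and $z_2 \in H^\perp$, fix some positive $s \geq |z_2|$, and for each $r > 0$ form the $H$-symmetric spherical cylinder
$$B_r := (r(B_2^n \cap H) + z_1) + s(B_2^n \cap H^\perp),$$
which contains $z$. Monotonicity combined with invariance on $H$-symmetric spherical cylinders yields $\diamondsuit\{z\} \subseteq \diamondsuit B_r = B_r$, hence $(\diamondsuit\{z\})|H \subseteq z_1 + r(B_2^n \cap H)$. Intersecting over $r > 0$ gives $(\diamondsuit\{z\})|H \subseteq \{z_1\}$, and since $\diamondsuit\{z\}$ is a nonempty element of $\mathcal{K}^n_H$, equality must hold: $(\diamondsuit\{z\})|H = \{z|H\}$.

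For arbitrary $K \in \mathcal{K}^n$ and any $x \in K|H$, I would pick $z \in K$ with $z|H = x$; the inclusion $\{z\} \subseteq K$ together with monotonicity then gives $\diamondsuit\{z\} \subseteq \diamondsuit K$, so
$$\{x\} = (\diamondsuit\{z\})|H \subseteq (\diamondsuit K)|H.$$
Letting $x$ range over $K|H$ finishes the argument.

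I do not anticipate a genuine obstacle here; the proof is essentially bookkeeping. The only things to verify are that each $B_r$ really is a valid $H$-symmetric spherical cylinder in the sense of the paper (which needs $r > 0$, $s > 0$, and center in $H$, all satisfied by construction) and that $\diamondsuit\{z\}$ is nonempty, which is automatic since $\mathcal{K}^n_H$ consists of nonempty compact convex sets.
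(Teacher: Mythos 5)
Your proof is correct, and it is a cleaner route than the one in the paper. The paper's proof, for $K \in \mathcal{K}^n_n$ and $x \in \textnormal{relint}(K|H)$, inscribes a (not necessarily $H$-symmetric) small spherical cylinder $C(\delta,x)+y \subseteq K$ and traps its $\diamondsuit$-image inside a large $H$-symmetric spherical cylinder with thin $H$-cross-section; this forces $(\diamondsuit K)|H$ to meet $\delta(B_2^n\cap H)+x$ for every small $\delta$, and then a relative-interior-plus-closedness argument finishes it. You instead apply the exact same trapping mechanism to a singleton $\{z\}$ directly, obtaining the equality $(\diamondsuit\{z\})|H=\{z|H\}$, and then propagate by monotonicity from below. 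This buys two things: it dispenses entirely with the relint/closure step, and it handles all of $\mathcal{K}^n$ uniformly without having to single out the full-dimensional case (the paper's proof as written only inscribes the small cylinder when $K$ is a body, so it implicitly relies on the reader extending to lower-dimensional sets). The one hypothesis you both use in the same way is the nonemptiness of $\diamondsuit$-images, which the paper also invokes explicitly.
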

\begin{proof}
First let us fix some notation. For $\delta>0$, $x\in H$, set $$C(\delta,x):=\delta (B_2^n\cap H)+\delta (B_2^n\cap H^{\perp})+x. $$Thus, $C(\delta,x)$ is always an $H$-symmetric spherical cylinder. Let $K\in {\cal K}^n_n$ and $x\in \textnormal{relint}(K|H)$. Then, there exist $y\in H^{\perp}$ and $\delta_0>0$, such that $C(\delta,x)+y\subseteq K$, for all $0<\delta<\delta_0$. On the other hand, there exists a large enough $t>0$, so that $\delta (B_2^n\cap H)+t(B_2^n\cap H^{\perp})+x\supseteq C(\delta,x)+y$. Thus, by the monotonicity of $\diamondsuit$ and the fact that $\diamondsuit$ is invariant on $H$-symmetric spherical cylinders, we get:
\begin{eqnarray*}
 \left(\diamondsuit\left(C(\delta,x)+y\right)\right)|H&\subseteq& \diamondsuit\left(\left(\delta (B_2^n\cap H)+t(B_2^n\cap H^{\perp})+x\right)\right)|H\\&=&\left(\delta (B_2^n\cap H)+t(B_2^n\cap H^{\perp})+x\right)|H\\
 &=&\delta(B_2^n\cap H)+x.
 \end{eqnarray*}
Since $\diamondsuit\left(C(\delta,x)+y\right)$ is non empty and since $\diamondsuit K\supseteq \diamondsuit\left(C(\delta,x)+y\right)$, it follows that $\left((\diamondsuit K)|H\right)\cap \left(\delta(B_2^n\cap H)+x\right)\neq \emptyset$. Since $x$ is an arbitrary point in
$\textnormal{relint}(K|H)$ and $\delta$ can be as small as we want, it follows that $(\diamondsuit K)|H\supseteq \textnormal{relint}(K|H)$. Our claim follows by the fact that $(\diamondsuit K)|H$ is a closed set.
\end{proof}
\begin{definition}\label{def-sets-of-special-form}
Any set of the form $L+s(B_2^n\cap H^{\perp})$ will be called {\it set of special form}, where $s>0$ and $L$ is any convex body in $H$.
\end{definition}
\begin{lemma}\label{lemma-sets-of-special-form-1}
Let $H$ be a proper subspace of $\mathbb{R}^n$ and $\diamondsuit:{\cal K}_n^n\to {\cal K}_H^n$ be a monotonic map, which is invariant on $H$-symmetric spherical cylinders. Then, $T\subseteq\diamondsuit T$, for any set $T$ of special form.
\end{lemma}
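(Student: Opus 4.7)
The plan is to exhaust $T = L + s(B_2^n \cap H^{\perp})$ from inside by $H$-symmetric spherical cylinders, each of which is fixed by $\diamondsuit$, and then use monotonicity together with the fact that $\diamondsuit T$ is closed to conclude that $T \subseteq \diamondsuit T$. The argument is in the same spirit as the proof of Lemma \ref{lemma-projection-of-symmetrization-contains-projection}, but simpler, because we are trying to contain $T$ itself in $\diamondsuit T$ rather than its projection onto $H$.

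First, I would fix an arbitrary point $x$ in the relative interior (taken inside $H$) of $L$. Since $L$ is a convex body in $H$, there exists $r_x > 0$ such that $r_x(B_2^n\cap H)+x\subseteq L$, and consequently the $H$-symmetric spherical cylinder
$$C_{r_x,x}:=r_x(B_2^n\cap H)+x+s(B_2^n\cap H^{\perp})$$
is contained in $T$. Because $C_{r_x,x}\in {\cal K}_n^n$ is an $H$-symmetric spherical cylinder, the invariance hypothesis gives $\diamondsuit C_{r_x,x}=C_{r_x,x}$, and monotonicity of $\diamondsuit$ then yields $C_{r_x,x}\subseteq \diamondsuit T$. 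In particular, $\{x\}+s(B_2^n\cap H^{\perp})\subseteq \diamondsuit T$.

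Letting $x$ range over $\textnormal{relint}_H(L)$ and taking the union gives $\textnormal{relint}_H(L)+s(B_2^n\cap H^{\perp})\subseteq \diamondsuit T$. Since $\textnormal{relint}_H(L)$ is dense in $L$ and $s(B_2^n\cap H^{\perp})$ is compact, the closure of the left-hand side is exactly $L+s(B_2^n\cap H^{\perp})=T$. Because $\diamondsuit T\in {\cal K}_H^n$ is closed, passing to the closure gives $T\subseteq \diamondsuit T$, as required. There is no serious obstacle here: the argument uses only the two hypotheses on $\diamondsuit$ (monotonicity and invariance on $H$-symmetric spherical cylinders), together with the elementary topological fact that the closure of a Minkowski sum of a dense set with a compact set recovers the full sum.
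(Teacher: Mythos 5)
Your proof is correct and follows essentially the same approach as the paper: fix $x\in\textnormal{relint}_H L$, inscribe an $H$-symmetric spherical cylinder of the form $\delta(B_2^n\cap H)+s(B_2^n\cap H^\perp)+x$ inside $T$, use invariance plus monotonicity to place it inside $\diamondsuit T$, vary $x$, and conclude by closedness of $\diamondsuit T$. The only cosmetic difference is that you justify the closure step via density of $\textnormal{relint}_H L$ in $L$ and compactness of $s(B_2^n\cap H^\perp)$, whereas the paper passes through $\textnormal{int}\,T\subseteq\diamondsuit T$; both are sound.
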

\begin{proof}
Let $L$ be a convex body in $H$ and $s>0$. Set $T:=L+s(B_2^n\cap H^{\perp})$. We need to show that $T\subseteq\diamondsuit T$. Let $x\in\textnormal{relint}(T|H)=\textnormal{relint}L$. Then, there exists $\delta>0$, such that $\delta(B_2^n\cap H)+x\subseteq T|H$. Set
$C:=\delta(B_2^n\cap H)+s(B_2^n\cap H^{\perp})+x$. Then, $C$ is an $H$-symmetric spherical cylinder contained in $T$. Thus, $C=\diamondsuit C\subseteq \diamondsuit T$. In particular, $s(B_2^n\cap H^{\perp})+x\subseteq \diamondsuit T$ and since $x$ is an arbitrary point of $\textnormal{relint}L=\textnormal{relint}(T|H)$, it follows that $\textnormal{int}T\subseteq \diamondsuit T$. Since $\diamondsuit T$ is a closed set, the assertion follows.
\end{proof}
\begin{lemma}\label{lemma-sets-of-special-form-2}
Let $H$ be a proper subspace of $\mathbb{R}^n$ and $\diamondsuit:{\cal K}_n^n\to {\cal K}_H^n$ be a monotonic map. If $\diamondsuit T\subseteq T$, for all sets $T$ of special form, then $\diamondsuit$ is projection invariant.
\end{lemma}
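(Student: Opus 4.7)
The plan is to establish $(\diamondsuit K)|H = K|H$ for every $K \in {\cal K}_n^n$ by proving both inclusions. The inclusion $(\diamondsuit K)|H \subseteq K|H$ is immediate: since $K$ is bounded there exists $s > 0$ with $K \subseteq T_0 := (K|H) + s(B_2^n \cap H^{\perp})$, and $T_0$ is of special form. By monotonicity and the hypothesis, $\diamondsuit K \subseteq \diamondsuit T_0 \subseteq T_0$, so projecting on $H$ yields $(\diamondsuit K)|H \subseteq T_0|H = K|H$.

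For the reverse inclusion $(\diamondsuit K)|H \supseteq K|H$, I plan to mimic the argument in the proof of Lemma \ref{lemma-projection-of-symmetrization-contains-projection}: fix $x$ in the relative interior of $K|H$ and an arbitrary $\delta > 0$, and produce a non-empty piece of $(\diamondsuit K)|H$ inside $\delta (B_2^n \cap H) + x$. Since $\textnormal{int}(K)|H = \textnormal{relint}(K|H)$ (a standard fact about linear images of convex bodies), there is $w \in H^{\perp}$ with $x + w \in \textnormal{int}\, K$, and then $\varepsilon > 0$ with $(x+w) + \varepsilon B_2^n \subseteq K$. Set $\eta := \min\{\delta, \varepsilon/\sqrt{2}\}$ and form the special set $T := (\eta(B_2^n \cap H) + x) + \eta(B_2^n \cap H^{\perp})$. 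A direct computation gives $T + w \subseteq (x + w) + \eta\sqrt{2}\, B_2^n \subseteq K$, and simultaneously $T + w \subseteq T'$, where $T' := (\eta(B_2^n \cap H) + x) + (\eta + |w|)(B_2^n \cap H^{\perp})$ is again of special form.

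Monotonicity of $\diamondsuit$ applied to each inclusion, combined with the hypothesis applied to $T'$, produces $\diamondsuit(T+w) \subseteq \diamondsuit K$ and $\diamondsuit(T+w) \subseteq \diamondsuit T' \subseteq T'$. Projecting the second chain onto $H$ gives $(\diamondsuit(T+w))|H \subseteq T'|H = \eta(B_2^n \cap H) + x \subseteq \delta(B_2^n \cap H) + x$. Since $T + w \in {\cal K}_n^n$, its image $\diamondsuit(T+w) \in {\cal K}_H^n$ is non-empty (the tacit non-emptiness assumption, paralleling the one used in the proof of Lemma \ref{lemma-reduction-to-1case}), so $(\diamondsuit(T+w))|H$ is a non-empty subset of both $\delta(B_2^n \cap H) + x$ and $(\diamondsuit K)|H$. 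Letting $\delta \to 0$ and using that $(\diamondsuit K)|H$ is closed forces $x \in (\diamondsuit K)|H$. Hence $(\diamondsuit K)|H \supseteq \textnormal{relint}(K|H)$, and closedness upgrades this to $\supseteq K|H$.

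The heart of the argument --- and where I expect to spend most of the care --- is the choice of $T$ and $T'$: the special cylinder $T$ must be shifted by $w \in H^{\perp}$ so that $T+w$ fits inside $K$ near $x+w$, yet the ``re-symmetrizing'' enclosure $T'$ must be obtained by enlarging only in the $H^{\perp}$ direction, so that its $H$-base remains the small ball $\eta(B_2^n \cap H) + x$ around $x$. This separation between $H$-footprint (which stays pinned near $x$) and $H^{\perp}$-height (which must be inflated to restore $H$-symmetry) is exactly what allows the one-sided upper bound $\diamondsuit T' \subseteq T'$ to squeeze $(\diamondsuit(T+w))|H$ into an arbitrarily small neighborhood of $x$.
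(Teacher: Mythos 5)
Your proof is correct, and for the easy inclusion $(\diamondsuit K)|H\subseteq K|H$ you argue exactly as the paper does, enclosing $K$ in the special-form cylinder $(K|H)+s(B_2^n\cap H^\perp)$. For the reverse inclusion the paper simply invokes Lemma \ref{lemma-projection-of-symmetrization-contains-projection}; you instead re-run that lemma's argument directly from the present hypotheses (``$\diamondsuit T\subseteq T$ for all special-form $T$''), producing small special cylinders $T+w\subseteq K$ and enclosing special cylinders $T'$ whose $H$-footprint is pinned near $x$. This is worth noting: the hypothesis of Lemma \ref{lemma-projection-of-symmetrization-contains-projection} as stated is \emph{invariance} on $H$-symmetric spherical cylinders, which is \emph{not} literally implied by ``$\diamondsuit T\subseteq T$'' (you only get one inclusion). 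The paper's citation is therefore slightly loose --- it works only because an inspection of that proof shows it uses just the inclusion $\diamondsuit C\subseteq C$ for spherical cylinders, which your hypothesis does supply. Your re-derivation makes that point explicit and is, in that narrow sense, a cleaner route to the same conclusion; otherwise, the two proofs are the same.
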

\begin{proof}
Fix a convex body $K$ in $\mathbb{R}^n$. By Lemma \ref{lemma-projection-of-symmetrization-contains-projection}, it suffices to show that $(\diamondsuit K)|H\subseteq K|H$. It is clear that there exists a large enough $s>0$, so that $K|H^{\perp}$ is contained in $s(B_2^n\cap H^{\perp})$. Then, $K\subseteq T:=(K|H)+s(B_2^n\cap H^{\perp})$. Notice that $K|H=T|H$ and that $T$ is a set of special form. By our assumption and the monotonicity of $\diamondsuit$, we have:
$$(\diamondsuit K)|H\subseteq (\diamondsuit T)|H\subseteq T|H=K|H,$$ as claimed.
\end{proof}
We are now ready to prove the main results of this section. The proofs of Theorems \ref{thm1} and \ref{thm2} are similar and will follow from the lemmas we have established.\\
\\
Proof of Theorem \ref{thm1}.\\
By Lemma \ref{lemma-reduction-to-1case}, it suffices to prove that if $\diamondsuit:{\cal K}_n^n\to{\cal K}^n_H$ is a strictly monotonic, invariant on $H$-symmetric spherical cylinders and idempotent map, then $\diamondsuit$ is projection invariant. Actually, by Lemma \ref{lemma-sets-of-special-form-2}, it suffices to show that $\diamondsuit T=T$, for any set $T$ of special form. By Lemma, \ref{lemma-sets-of-special-form-1}, we have $\diamondsuit T\supseteq T$. Assume that $\diamondsuit T\neq T$. Then, by the strict monotonicity of $\diamondsuit$, it follows that $\diamondsuit \diamondsuit T$ strictly contains $\diamondsuit T$. However, $\diamondsuit$ is idempotent, thus $\diamondsuit \diamondsuit T=\diamondsuit T$. This is a contradiction, thus $\diamondsuit T=T$ and the proof
is complete. $\square$\\
\\
Proof of Theorem \ref{thm2}.\\
As before, Lemmas \ref{lemma-reduction-to-1case} and \ref{lemma-sets-of-special-form-2} show that it suffices to prove that if $\diamondsuit:{\cal K}_n^n\to{\cal K}^n_H$ is a monotonic, invariant on $H$-symmetric spherical cylinders and $F$-preserving map, where $F:{\cal K}_n^n\to {\cal K}^n_H$ is a strictly increasing set function, then $\diamondsuit T=T$, for any set $T$ of special form. Lemma \ref{lemma-sets-of-special-form-1} shows that $\diamondsuit T\supseteq T$. If $\diamondsuit T\neq T$, then by the strict monotonicity of $F$, we would have $F(\diamondsuit T)>F(T)$. This contradicts the fact that $\diamondsuit$ is $F$-preserving, thus $\diamondsuit T=T$, for all sets $T$ of special form. $\square$
\begin{remark}\label{rem-s3}
It follows by the proofs of Theorems \ref{thm1} and \ref{thm2} (a fact that was also mentioned earlier) that slightly more general versions of these two theorems (in the case that ${\cal B}={\cal K}^n_n$) are valid. Let $i\in\{1,\dots, n-1\}$, $H\in G _{n,i}$ and $\diamondsuit:{\cal K}_n^n\to {\cal K}^n_H$ be a map which is strictly monotonic and invariant on $H$-symmetric cylinders. If $\diamondsuit$ is idempotent or $F$-preserving, for some strictly increasing set-function $F:{\cal K}_n^n\to [0,\infty)$, then $\diamondsuit$ is projection invariant.
\end{remark}
\section{$V_j$-preserving symmetrization operators, $2\leq j\leq n-1$}\label{s-V_j}
\hspace*{1.5em}The main goal of this section is to solve Problem \ref{problm-1} in the case $2\leq j\leq n-i$.
\begin{theorem}\label{thm-v_j}
Let $i\in\{0,\dots,n-1\}$, $j\in\{2,\dots,n\}$, such that $j\leq n-i$ and $H\in G_{n,i}$. Set ${\cal B}={\cal K}^n $or ${\cal K}_n^n$. There does not exist a symmetrization $\diamondsuit :{\cal B}\to{\cal B}_H$ which is monotonic, invariant on $H$-symmetric sets and $V_j$-preserving.	
\end{theorem}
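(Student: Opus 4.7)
The strategy has three reductions and a final contradiction. First, by Lemma \ref{lemma-natural-extension} (parts (ii) and (iv)), together with the observation that monotonicity passes to the natural extension, I may assume ${\cal B} = {\cal K}^n$ and work with $\diamondsuit : {\cal K}^n \to {\cal K}^n_H$.

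Next I extract structure from the hypotheses. Since $V_j$ is strictly monotonic on ${\cal K}_n^n$, the combination of $V_j$-preservation and monotonicity forces $\diamondsuit$ to be strictly monotonic on ${\cal K}_n^n$: if $K_1 \subsetneq K_2$ are both full-dimensional, then $V_j(\diamondsuit K_1) < V_j(\diamondsuit K_2)$ and hence $\diamondsuit K_1 \subsetneq \diamondsuit K_2$. Moreover, $\diamondsuit$ is idempotent (since $\diamondsuit K$ is $H$-symmetric) and invariant on $H$-symmetric spherical cylinders. When $i \geq 1$, Theorem \ref{thm1} then yields projection invariance, and since $(\diamondsuit K)|H = K|H = \{o\}$ for $K \in {\cal K}(H^\perp)$, one has $\diamondsuit K \subseteq H^\perp$. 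Thus $\diamondsuit$ restricts to a monotonic, $V_j$-preserving, identity-on-$o$-symmetric symmetrization of ${\cal K}(H^\perp) \cong {\cal K}^{n-i}$. When $i = 0$ we are already in that situation, so in either case it suffices to rule out the case $H = \{o\}$ in some ambient dimension $m$ with $m \ge j \ge 2$.

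A further restriction to a $j$-dimensional subspace $F \subseteq \mathbb{R}^m$ (which exists since $m \ge j$) cuts the problem down to ambient dimension $j$. Indeed, for $K \in {\cal K}(F)$, the smallest $o$-symmetric convex body containing $K$ is $\mathrm{conv}(K \cup -K) \subseteq F$, so monotonicity yields $\diamondsuit K \subseteq F$, and the restriction $\diamondsuit_F := \diamondsuit|_{{\cal K}(F)}$ is a monotonic, volume-preserving $o$-symmetrization of $F \cong \mathbb{R}^j$ that fixes $o$-symmetric bodies. The remaining task is to show no such $\diamondsuit_F$ exists when $j \ge 2$.

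This final step is the main obstacle. Even with strict monotonicity on ${\cal K}_j^j$, idempotence, and invariance on balls, Theorem \ref{thm1} is not directly applicable (it requires $i \ge 1$). My plan is to analyze translates $K_v := K_0 + v$ of a fixed $o$-symmetric body $K_0 \in {\cal K}_j^j$: each $K_v$ satisfies $V_j(K_v) = V_j(K_0)$ and is sandwiched between the $o$-symmetric bodies $K_v^- = (K_0 + v) \cap (K_0 - v)$ and $K_v^+ = K_0 + [-v, v]$. Strict monotonicity of $V_j$ on full-dimensional $o$-symmetric bodies, together with the volume constraint $V_j(\diamondsuit K_v) = V_j(K_0)$, pins the coordinate extents of $\diamondsuit K_v$ into narrow $v$-dependent windows. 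Comparing the constraints arising from different choices of $v$ through common $o$-symmetric enveloping bodies will then produce the sought-after inconsistency.
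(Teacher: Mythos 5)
Your reduction steps are sound and essentially parallel the paper's: you pass to ${\cal K}^n$ via the natural extension, you deduce strict monotonicity on ${\cal K}^n_n$ and (via Theorem~\ref{thm1} or Theorem~\ref{thm2}) projection invariance, and you use projection invariance (or, when $i=0$, invariance on $o$-symmetric sets) to restrict to a $j$-dimensional subspace of $H^\perp$, reducing everything to the case $H=\{o\}$, $j=n\ge 2$. The paper carries out the same reduction, albeit in the opposite order (restrict to ${\cal K}_n^n$ via Proposition~\ref{prop-v_j-1}, then take the natural extension) and with more care about full-dimensionality.

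The genuine gap is in the base case, which is the entire mathematical content of the theorem. You must rule out a monotonic, $V_j$-preserving $o$-symmetrization of ${\cal K}^j$ (or ${\cal K}^j_j$), $j\ge 2$, that is invariant on $o$-symmetric sets, and your final paragraph only sketches a plan rather than giving a proof. The sandwiching $K_v^-\subseteq \diamondsuit K_v\subseteq K_v^+$ that you propose is too loose: $K_v^-$ can have much smaller volume than $K_0$ and $K_v^+$ much larger, so the volume constraint $V_j(\diamondsuit K_v)=V_j(K_0)$ does not obviously ``pin'' $\diamondsuit K_v$ in any way that produces a contradiction, and you give no argument that different choices of $v$ conflict. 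The paper needs two nontrivial steps here that your proposal does not contain: Proposition~\ref{prop-v_j-2}, which uses a cylinder $I+\delta B$ (lower bound on the length of $\overline{\diamondsuit}I$) and a cone $\mathrm{conv}((B+x)\cup I)$ together with a Steiner symmetrization (upper bound) to prove that $\overline{\diamondsuit}$ sends every segment through the origin to the $o$-symmetric segment of the same length; and Lemma~\ref{lemma-triangle}, which shows that this segment property forces the image of an equilateral triangle (lifted to a double-cone-like body over $[-e_3,e_3],\dots,[-e_n,e_n]$) to have strictly larger volume, contradicting $V_j$-preservation. Without these, or some equally concrete substitute, the argument is not a proof; the ``narrow windows'' idea would need to be made precise and I do not see how it would yield the required rigidity on its own.
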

Before proving Theorem \ref{thm-v_j}, we will need some geometric statements.
\begin{lemma}\label{lemma-subspaces}
Let $i\in\{0,\dots,n-1\}$, $H\in G_{n,i}$ and $\diamondsuit :{\cal K}^n\to{\cal K}^n_H$ a symmetrization which is monotonic and invariant on $H$-symmetric sets. Then, for every affine subspace $G$ of $\mathbb{R}^n$ that either contains a translate of $H^\perp$ or is a linear subspace of $H^\perp$, it holds $\diamondsuit K\subseteq G$, for all $K\in{\cal K}^n$, $K\subseteq G$.
\end{lemma}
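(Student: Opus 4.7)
The plan is to show that in both cases $G$ is itself $H$-symmetric, after which the conclusion follows by a one-line sandwich argument using monotonicity together with invariance on $H$-symmetric sets.

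First, I would verify that $G$ is $H$-symmetric. Write $G=L+b$ with $L$ its direction subspace. In Case~1, the hypothesis that $G$ contains some translate $H^{\perp}+a$ forces $H^{\perp}\subseteq L$, so $L$ splits as $(L\cap H)\oplus H^{\perp}$, and the $H^{\perp}$-component of $b$ can be absorbed into $L$, reducing to $b\in H$. A direct computation with a generic point $p=\ell_H+\ell_{\perp}+b$ (where $\ell_H\in L\cap H$ and $\ell_{\perp}\in H^{\perp}$) shows that its reflection about $H$ equals $\ell_H-\ell_{\perp}+b\in G$, so $G^H=G$. In Case~2, $G\subseteq H^{\perp}$ is a linear subspace, hence closed under negation; since the reflection about $H$ acts on $H^{\perp}$ as $p\mapsto -p$, we again obtain $G^H=G$.

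Having established the $H$-symmetry of $G$, the rest is essentially automatic. Given $K\in {\cal K}^n$ with $K\subseteq G$, I would set $K':=\textnormal{conv}(K\cup K^H)$, where $K^H$ denotes the reflection of $K$ about $H$. Then $K'$ is a compact convex $H$-symmetric set containing $K$, that is, $K'\in {\cal K}^n_H$. Since $G$ is convex and $H$-symmetric, $K^H\subseteq G$, and therefore $K'\subseteq G$. Monotonicity of $\diamondsuit$ yields $\diamondsuit K\subseteq \diamondsuit K'$, and invariance on $H$-symmetric sets yields $\diamondsuit K'=K'$. Combining, $\diamondsuit K\subseteq K'\subseteq G$, which is exactly the desired inclusion.

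The only subtlety I anticipate is the linear-algebra verification that $G$ is $H$-symmetric in Case~1 --- specifically, isolating the $H^{\perp}$-component of the base point $b$ and exploiting the decomposition $L=(L\cap H)\oplus H^{\perp}$. This is routine but easy to mishandle if one does not first reduce to $b\in H$. Case~2 and the sandwich step are immediate, so no genuine obstacle is expected.
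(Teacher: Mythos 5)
Your proof is correct and follows essentially the same strategy as the paper: establish that $G$ is $H$-symmetric, then enclose $K$ in an $H$-symmetric compact convex subset of $G$ and apply monotonicity together with invariance on $H$-symmetric sets. The only cosmetic difference is the choice of enclosing set --- the paper takes a large relative ball $G\cap(rB_2^n+x)$ with $x\in G\cap H$, whereas you take $\textnormal{conv}(K\cup K^H)$; both work equally well.
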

\begin{proof}
Notice that in both cases, $G$ is $H$-symmetric. Thus, if $r>0$ and $x\in G\cap H$, the ball $B_r(x):=G\cap (B_2^n+x)$ is $H$-symmetric. Let $K\in{\cal K}^n$, such that $K\subseteq G$. Choose $r>0$ to be so large that $B_r(x)\supseteq K$. Then, $\diamondsuit K\subseteq \diamondsuit B_r(x)=B_r(x)\subseteq G$, as claimed.	
\end{proof}
\begin{proposition}\label{prop-v_j-1}
Let $i\in\{0,\dots,n-1\}$, $j\in\{1,\dots,n\}$, $H\in G_{n,i}$ and $\diamondsuit :{\cal K}^n\to{\cal K}^n_H$ a symmetrization which is monotonic, $V_j$-preserving and invariant on $H$-symmetric sets. Then, $\diamondsuit K\in {\cal K}_{n,H}^n$, for all $K\in {\cal K}_{n}^n$.
\end{proposition}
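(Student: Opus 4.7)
The plan is to argue by contradiction. Suppose some $K\in\mathcal{K}_n^n$ has $D:=\diamondsuit K$ with $\dim D<n$, and put $A:=\mathrm{aff}(D)$. Then $A$ is $H$-symmetric (as the affine hull of an $H$-symmetric set) and proper (because $\dim D<n$). The hypothesis $V_j(D)=V_j(K)>0$ forces $\dim D\geq j$, but we must upgrade this to $\dim D=n$.

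The driving observation is a pullback principle coming from monotonicity and invariance. For every $K'\subseteq K$ monotonicity gives $\diamondsuit K'\subseteq D\subseteq A$; if moreover $K'$ is $H$-symmetric, invariance on $H$-symmetric sets yields $K'=\diamondsuit K'\subseteq A$. Thus every $H$-symmetric convex subbody of $K$ must lie in the proper subspace $A$. In particular, if there exists $p\in H\cap\mathrm{int}(K)$, then for small $\epsilon>0$ the ball $B_\epsilon(p)$ is $H$-symmetric (because $p\in H$) and contained in $K$, whence $B_\epsilon(p)\subseteq A$. This contradicts $A$ being proper and $B_\epsilon(p)$ being full-dimensional, so the case $H\cap\mathrm{int}(K)\neq\emptyset$ is eliminated at once.

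It remains to handle the bad case $H\cap\mathrm{int}(K)=\emptyset$, i.e.\ $K$ sits on one side of some hyperplane containing $H$. Here the $V_j$-preserving hypothesis must be used in earnest. The natural $H$-symmetric enlargement is $L:=\mathrm{conv}(K\cup K^H)$, where $K^H$ is the $H$-reflection of $K$. Then $L$ strictly contains $K$ (since $K$ is not $H$-symmetric), $\mathrm{int}(L)\cap H\neq\emptyset$ (the midpoint of an interior point of $K$ and its $H$-reflection lies on $H$ and in $\mathrm{int}(L)$), $\diamondsuit L=L$ by invariance, and $D\subseteq L$ by monotonicity; strict monotonicity of $V_j$ on $\mathcal{K}_n^n$ gives $V_j(L)>V_j(K)=V_j(D)$. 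The plan is to combine the applied pullback principle ($K\cap K^H\subseteq A$) with a clever comparison inside $L$: one looks for an $H$-symmetric intermediate body squeezed between $D$ and $L$ that, when handled by $\diamondsuit$ via invariance plus monotonicity, violates $V_j(D)=V_j(K)$ through the strict Steiner/Brunn--Minkowski inequality saying that enlarging the low-dimensional $D$ by any $A^\perp$-direction inside $L$ strictly increases $V_j$.

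The main obstacle is exactly this last step: producing the intermediate $H$-symmetric body that closes the contradiction. Because strict monotonicity of $V_j$ on full-dimensional bodies prevents us from finding an $H$-symmetric $M\supsetneq K$ with $V_j(M)=V_j(K)$, the contradiction must instead be engineered on the $D$-side, by thickening $D$ slightly in $A^\perp$-directions (e.g.\ $M_\epsilon:=(D+\epsilon(B_2^n\cap A^\perp))\cap L$), showing $M_\epsilon$ is a full-dimensional $H$-symmetric body with $V_j(M_\epsilon)>V_j(D)$ for every $\epsilon>0$, and then exploiting an inclusion $\diamondsuit K'\supseteq M_\epsilon$ for a suitable $K'\subseteq K$ to contradict $\diamondsuit K'\subseteq D$. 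I expect the delicate point to be choosing $K'$ and $\epsilon$ compatibly, which is where the proof becomes technical.
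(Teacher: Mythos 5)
Your argument for the first case ($H\cap\mathrm{int}(K)\neq\emptyset$) is correct: a small ball around a point of $H\cap\mathrm{int}(K)$ is a full-dimensional $H$-symmetric subset of $K$, hence fixed by $\diamondsuit$ and contained in $D\subseteq A$, contradicting properness of $A$. But this is the easy case and uses only monotonicity and invariance, not $V_j$-preservation.

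The second case is where the substance lies, and the closing step you propose cannot work. You want to find $K'\subseteq K$ with $\diamondsuit K'\supseteq M_\epsilon$, where $M_\epsilon$ is a full-dimensional thickening of $D$. But for any $K'\subseteq K$, monotonicity already gives $\diamondsuit K'\subseteq\diamondsuit K=D\subseteq A$, and since $M_\epsilon\supsetneq D$ is full-dimensional while $D$ is not, the containment $\diamondsuit K'\supseteq M_\epsilon$ is impossible for \emph{every} $K'\subseteq K$. This is not a ``delicate technical point'' but a structural dead end: the only lower bound your ``pullback principle'' yields for $\diamondsuit K'$ comes from $H$-symmetric subsets of $K'$, and in the case under consideration these all lie in $K\cap K^H$, which has dimension $<n$. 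So your lower bounds can never produce anything full-dimensional. What is missing is a way to constrain $\diamondsuit$ on subsets of $K$ that are \emph{not} $H$-symmetric, and this is precisely where $V_j$-preservation must enter.

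The paper's route is quite different. It first proves a lemma (Lemma~\ref{lemma-subspaces}) using invariance on large $H$-symmetric balls: if $G$ is an affine subspace that contains a translate of $H^\perp$ or is a subspace of $H^\perp$, then $\diamondsuit$ maps convex subsets of $G$ into $G$. With this in hand, in the base case $i=0$ one fixes an interior point $y$ of $K$ and, for each $j$-dimensional subspace $S$ containing $y$, observes that $K\cap S$ is $j$-dimensional, that $\diamondsuit(K\cap S)\subseteq S$ by the lemma, and that $V_j$-preservation then forces $\diamondsuit(K\cap S)$ to be $j$-dimensional, hence to span $S$. Since $\diamondsuit(K\cap S)\subseteq(\diamondsuit K)\cap S$ and the union of all such $S$ is $\mathbb{R}^n$, $\diamondsuit K$ is full-dimensional. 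For $i>0$ the paper performs an induction on $n$, restricting $\diamondsuit$ to hyperplanes $F+x$ containing a translate of $H^\perp$ and invoking the $(n-1)$-dimensional statement. Note that here the lower-dimensional sections $K\cap S$ need not be $H$-symmetric; it is the lemma (not invariance alone) that confines $\diamondsuit(K\cap S)$ to $S$, and $V_j$-preservation that forces it to fill out $S$. That mechanism is the idea your proposal is missing.
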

\begin{proof}
Let $K\in {\cal K}_{n}^n$. We need to show that $\diamondsuit K$ is $n$-dimensional. First assume that $i=0$. It follows immediately from Lemma \ref{lemma-subspaces} that for any $j$-dimensional subspace $S$ of $\mathbb{R}^n$, it holds $\diamondsuit(K\cap S)\subseteq S$. If it happens that $V_j(K\cap S)>0$, then $V_j(\diamondsuit(K\cap S))>0$. We conclude that if $\textnormal{span}(K\cap S)=S$, i.e. $K\cap S$ is $j$-dimensional, then $\diamondsuit(K\cap S)$ is also $j$-dimensional, so (since clearly $\diamondsuit(K\cap S)\subseteq (\diamondsuit K)\cap S$) $(\diamondsuit K)\cap S$ is $j$-dimensional, hence $\textnormal{span}((\diamondsuit K)\cap S)=S$. First assume that $j=1$. Since $K$ is $n$-dimensional, there exist points $u_1,\dots,u_n\in K$, whose position vectors are linearly independent. Then, $\textnormal{span}(\diamondsuit K)\supseteq \textnormal{span}((\diamondsuit K)\cap (\mathbb{R}u_i))=\mathbb{R}u_i$, $i=1,\dots,n$, thus $\diamondsuit K$ is $n$-dimensional. Therefore, we may assume that $j\geq 2$. Set 
$y$ to be an interior point of $K$ and ${\cal S}$ to be the set of all $j$-dimensional subspaces of $\mathbb{R}^n$ that contain $y$. Then, clearly, the union of all subspaces from ${\cal S}$ is the whole $\mathbb{R}^n$ and for each $S\in{\cal S}$, it holds $\textnormal{span}(K\cap S)=S$. Thus, $\textnormal{span}(\diamondsuit K)\supseteq \bigcup_{S\in{\cal S}}\textnormal{span}((\diamondsuit K)\cap S)=\bigcup_{S\in{\cal S}}S=\mathbb{R}^n$, which again shows that $\diamondsuit K$ is $n$-dimensional.
	
In the general case, we will prove our claim by induction in $n$. The case $n=1$ is trivial. We may assume that $i>0$, otherwise we are done. Assume that the assertion holds for the positive integer $n-1$. Notice that we may assume that $n-1\geq j$, otherwise the result is again trivial. In the inductive step, assume that $\diamondsuit K$ is not full dimensional. That is, there exists a subspace $G$ of dimension $n-1$, such that $\diamondsuit  K\subseteq G$. Notice that the restriction $V_j|_{{\cal K}_n^n}$ of $V_j$ onto the family ${\cal K}_n^n$ is strictly monotonic. Thus, it follows by Remark \ref{rem-s3}, that the map $\diamondsuit|_{{\cal K}_n^n}:{\cal K}_n^n\to {\cal K}^n_H$ is projection invariant. In particular, since $\diamondsuit K$ is $H$-symmetric, we have $(\diamondsuit K)\cap H=(\diamondsuit K)|H=K|H$, which is full dimensional in $H$. Hence, $H\subseteq G$. Let $F$ be any subspace of $\mathbb{R}^n$, of dimension $n-1$, that contains $H^\perp$. 
Notice that $F$ cannot contain $H$, thus $F\neq G$. Then, there exists $x\in F^\perp\subseteq H$, such that $\textnormal{dim}(K\cap (F+x))=n-1$ (otherwise it would follow by Fubini's Theorem that $V_n(K)=0$). As before, it follows from Lemma \ref{lemma-subspaces} that if $L\in{\cal K}^n$ and $L\subseteq F+x$, then $\diamondsuit L\subseteq F+x$. This shows that the restriction $\diamondsuit'$ of $\diamondsuit$ onto the family of compact convex subsets of $F+x$ is a symmetrization on $F+x$. Since $\diamondsuit'$ is trivially monotonic, invariant on $H\cap (F+x)$-symmetric sets and $V_j$-preserving, it follows by the inductive hypothesis that $\textnormal{dim}(K\cap (F+x))=n-1$, hence $K\not\subseteq G$, which is a contradiction, proving our claim.
\end{proof}
\begin{proposition}\label{prop-v_j-2}
Let $i\in\{0,\dots,n-1\}$, $j\in\{1,\dots,n\}$, $H\in G_{n,i}$ and $\diamondsuit :{\cal K}^n_n\to{\cal K}^n_{n,H}$ a symmetrization which is monotonic, $V_j$-preserving and invariant on $H$-symmetric sets. Let $\varepsilon\subseteq H^\perp$ be a straight line containing the origin and $I\subseteq \varepsilon$ be a line segment. Then, $\overline{\diamondsuit}I$ is the $o$-symmetric line segment contained in $\varepsilon$ which has the same length as $I$, where $\overline{\diamondsuit}$ is the natural extension of $\diamondsuit$.
\end{proposition}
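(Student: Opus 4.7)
The plan is to pass to the natural extension $\overline{\diamondsuit}$ and reduce everything to a one-dimensional problem on $\varepsilon$, where $V_j$-preservation and the transfer lemmas already established in Section~\ref{s-n.e.} do the work. First I would verify that $\overline{\diamondsuit}:{\cal K}^n\to{\cal K}^n_H$ inherits from $\diamondsuit$ every property it needs. Monotonicity follows directly from the definition of $\overline{\diamondsuit}$ as a decreasing intersection together with the monotonicity of $\diamondsuit$. $V_j$-preservation is Lemma~\ref{lemma-natural-extension}(ii), and invariance on $H$-symmetric sets is Lemma~\ref{lemma-natural-extension}(iv). For projection invariance of $\overline{\diamondsuit}$ (the case $i=0$ being vacuous), I would invoke Theorem~\ref{thm2} with $F=V_j$, which is strictly increasing on ${\cal K}^n_n$, using that invariance on all $H$-symmetric sets is stronger than invariance on $H$-symmetric spherical cylinders; this gives projection invariance of $\diamondsuit$, and Lemma~\ref{lemma-natural-extension}(iii) transfers it to $\overline{\diamondsuit}$.

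Second, I would locate $\overline{\diamondsuit}I$ geometrically. Since $\varepsilon$ is a linear subspace of $H^{\perp}$, Lemma~\ref{lemma-subspaces} applied to $\overline{\diamondsuit}$ gives $\overline{\diamondsuit}I\subseteq \varepsilon$. As $\overline{\diamondsuit}I$ is $H$-symmetric and contained in $\varepsilon\subseteq H^{\perp}$, and since reflection across $H$ acts on $H^{\perp}$ by negation, $\overline{\diamondsuit}I$ is $o$-symmetric; a compact, convex, $o$-symmetric subset of the line $\varepsilon$ is either $\{o\}$ or $[-be,be]$ for some $b\geq 0$, where $e$ is a unit vector along $\varepsilon$.

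Third, to identify $b$, I would use $V_j(\overline{\diamondsuit}I)=V_j(I)$ from Lemma~\ref{lemma-natural-extension}(ii). When $j=1$, $V_1$ of a segment is a universal positive multiple of its length, so $V_1(\overline{\diamondsuit}I)=V_1(I)$ forces $b=a$ at once. When $j\geq 2$ both sides vanish, so one has to extract the length from the thickenings $I_m:=I+(1/m)B_2^n$. The identity $V_j(\diamondsuit I_m)=V_j(I_m)$ combined with the Steiner expansion of $V_j(I+tB_2^n)$ for a segment of length $2a$ in $\mathbb{R}^n$, which has leading term $\gamma_{n,j}\cdot 2a\cdot m^{-(j-1)}$ (only $V_0(I)=1$ and $V_1(I)=2a$ contribute), together with projection invariance $(\diamondsuit I_m)|H=(1/m)(B_2^n\cap H)$ and the Hausdorff convergence $\diamondsuit I_m \searrow \overline{\diamondsuit}I=[-be,be]$, pins the $\varepsilon$-half-extent $Y_m$ of $\diamondsuit I_m$ to satisfy $Y_m=a+O(1/m)$, whence $b=\lim Y_m=a$. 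The lower bound $b\geq a$ is easier: $\diamondsuit I_m$ is trapped in the product of its projections, so $V_j(\diamondsuit I_m)\leq V_j\!\bigl((1/m)(B_2^n\cap H)+[-Y_me,Y_me]\bigr)$, and matching leading orders with $V_j(I_m)$ gives $Y_m\geq a+o(1)$.

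The main obstacle will be the matching upper bound $b\leq a$ for $j\geq 2$. Naive monotonicity of $\diamondsuit$ against the smallest $H$-symmetric convex set containing $I_m$, namely $\mathrm{conv}(I_m\cup(-I_m))$, only caps $Y_m$ by $|c|+a+1/m$, which is not tight. The delicate step is to promote this to $Y_m\leq a+O(1/m)$ by combining the precise Steiner asymptotics of $V_j$ on both sides of the equality $V_j(\diamondsuit I_m)=V_j(I_m)$ with the exact projection constraint to rule out any ``inefficient'' shape for $\diamondsuit I_m$ in which the $\varepsilon$-extent persists above $2a$.
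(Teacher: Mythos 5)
Your setup is fine: transferring the relevant properties to $\overline{\diamondsuit}$ via Lemma~\ref{lemma-natural-extension}, obtaining projection invariance from Theorem~\ref{thm2} with $F=V_j$, and using Lemma~\ref{lemma-subspaces} to conclude that $\overline{\diamondsuit}I$ is an $o$-symmetric segment in $\varepsilon$ all match the paper, as does the direct $j=1$ argument. However, the core of the proposal --- matching Steiner asymptotics of $V_j(I+(1/m)B_2^n)$ against $V_j(\diamondsuit(I+(1/m)B_2^n))$ --- is not a proof, and it has two concrete problems.

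First, your lower-bound inclusion is false as stated. You claim $\diamondsuit I_m \subseteq (1/m)(B_2^n\cap H) + [-Y_m e, Y_m e]$, but the genuine ``product of projections'' bound is only $\diamondsuit I_m \subseteq (\diamondsuit I_m)|H + (\diamondsuit I_m)|H^\perp$, and $(\diamondsuit I_m)|H^\perp$ is a priori a full $(n-i)$-dimensional $o$-symmetric body in $H^\perp$, not a segment along $\varepsilon$. Projection invariance controls only the $H$-projection; nothing forces $\diamondsuit I_m$ to stay inside $H+\varepsilon$ (Lemma~\ref{lemma-subspaces} does not apply to the subspace $H+\varepsilon$). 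Moreover, if $i<j-1$, $V_j$ of the $(i+1)$-dimensional cylinder on the right-hand side is $0$, so the inequality could not even match the positive left-hand side. Second, the upper bound $b\le a$ is exactly where you say ``the delicate step is to promote this\ldots to rule out any inefficient shape'' --- this is a statement of the goal, not an argument, and it is precisely the part that needs a new idea. The paper sidesteps both difficulties by \emph{not} thickening with the full ball. For the lower bound it uses the $j$-dimensional cylinder $K_\delta = I + \delta B$ where $B$ is a $(j-1)$-dimensional $H$-symmetric ball inside $\varepsilon^\perp\cap H$, so that the exact product formula $V_j(K_\delta)=V_{j-1}(\delta B)V_1(I)$ holds and projection invariance onto $H$ controls the whole $H$-fiber. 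For the upper bound it builds the $j$-dimensional cone $L=\operatorname{conv}((B+x)\cup I)$ with apex at an endpoint $x$ of $I$, uses projection invariance to pin $(\overline{\diamondsuit}L)|H=B$, and then Steiner-symmetrizes $\overline{\diamondsuit}L$ with respect to $\varepsilon^\perp$ to extract a double cone $\operatorname{conv}(\overline{\diamondsuit}I\cup B)$, giving an exact (not asymptotic) volume comparison that forces $V_1(\overline{\diamondsuit}I)\le V_1(I)$. I would suggest replacing the asymptotic scheme with these two explicit constructions.
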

\begin{proof}
Set $I'$ to be the $o$-symmetric line segment contained in $\varepsilon$ which has the same length as $I$. By Lemma \ref{lemma-subspaces}, $\overline{\diamondsuit}I$ is contained in $\varepsilon$, so $\overline{\diamondsuit}I$ is an $o$-symmetric line segment (possibly degenerate). By Lemma \ref{lemma-natural-extension} (i), (ii), (v), we have $\overline{\diamondsuit}|_{{\cal K}_n^n}=\diamondsuit$, $\overline{\diamondsuit}$ is invariant on $H$-symmetric sets and $\overline{\diamondsuit}$ is $V_j$-preserving. Let $G$ be a subspace of $\varepsilon^\perp$ of dimension $j-1$ that either contains $H$ or is contained in $H$ (depending on whether $j-1\geq i$ or $j-1<i$). Then, in any case, it is clear that $G$ is $H$-symmetric, thus the ball $B:=B_2^n\cap G\cap H$ is $H$-symmetric. For $\delta>0$, define the $j$-dimensional cylinder $K_\delta:=I+\delta B$. Then,
\begin{equation}\label{eq-cyl-1}
V_j(K_\delta)=V_{j-1}(\delta B)V_1(I).
\end{equation}
Moreover, as in the proof of Proposition \ref{prop-v_j-1}, since $V_j$ is strictly increasing on ${\cal K}_n^n$, it follows by Theorem \ref{thm2} together with Lemma \ref{lemma-natural-extension} that $\diamondsuit$ and $\overline{\diamondsuit}$ are projection invariant. Hence,$$\overline{\diamondsuit}K_\delta\subseteq \left(\overline{\diamondsuit}K_\delta\right)|H+\left(\overline{\diamondsuit}K_\delta\right)|\varepsilon=\delta B+\left(\overline{\diamondsuit}K_\delta\right)|\varepsilon,$$Consequently,
\begin{equation}\label{eq-cyl-2}
V_j\left(\overline{\diamondsuit}K_\delta\right)\leq V_{j-1}(\delta B)V_1\left(\left(\overline{\diamondsuit}K_\delta\right)|\varepsilon\right) .
\end{equation}
Combining (\ref{eq-cyl-1}), (\ref{eq-cyl-2}) with the fact that $\overline{\diamondsuit}$ is $V_j$-preserving, we get $V_1\left(\left(\overline{\diamondsuit}K_\delta\right)|\varepsilon\right)\geq V_1(I)$, for all $\delta>0$.
Since $\left(\overline{\diamondsuit}K_\delta\right)|\varepsilon$ is $H$-symmetric (thus $o$-symmetric) and is contained in $\varepsilon$, we conclude that $\left(\overline{\diamondsuit}K_\delta\right)|\varepsilon\supseteq I'$, for all $\delta>0$. But by the definition of $\overline{\diamondsuit}$, we get
$$\overline{\diamondsuit}I=\left(\overline{\diamondsuit}I\right)|\varepsilon=\left(\bigcap_{m=1}^\infty\diamondsuit\left(I+\frac{1}{m}B_2^n\right)\right)|\varepsilon\supseteq \left(\bigcap_{m=1}^\infty \overline{\diamondsuit }K_{1/m}\right)|\varepsilon\supseteq I'.$$
To show the reverse inclusion, set $x$ to be one of the end-points of $I$ and consider the $j$-dimensional cone
$L:=\textnormal{conv}\left((B+x)\cup I\right)$. Then,
\begin{equation}\label{eq-cone-1}
V_j(L)=\frac{1}{j}V_{j-1}(B)V_1(I).
\end{equation}
Since, as mentioned previously, $\overline{\diamondsuit}$ is projection invariant, it follows that $\left(\overline{\diamondsuit}L\right)|H=L|H=B$.
Observe that the Steiner-symmetral $S_{\varepsilon^\perp}\left(\overline{\diamondsuit}L\right)$ of $\overline{\diamondsuit}L$, with respect to the hyperplane $\varepsilon^\perp$, contains $\overline{\diamondsuit}I$ and also contains $B$, it contains the double cone $\textnormal{conv}\left(\overline{\diamondsuit}I\cup B\right)$. Therefore,
\begin{equation}\label{eq-cones-2}
V_j\left(\overline{\diamondsuit}L\right)=V_j\left(S_{\varepsilon^\perp}\left(\overline{\diamondsuit}L\right)\right)\geq \frac{1}{j}V_{j-1}(B)V_1\left(\overline{\diamondsuit}I\right).
\end{equation}
Using again the fact that $\overline{\diamondsuit}$ is $V_j$-preserving, together with (\ref{eq-cone-1}) and (\ref{eq-cones-2}), we see that
$$V_1\left(\overline{\diamondsuit}I\right)\leq V_1(I)=V_1(I').$$
Since both segments $\overline{\diamondsuit}I$ and $I'$ are contained in $\varepsilon$ and they are both $o$-symmetric, it follows that $\overline{\diamondsuit}I\subseteq I'$, as required.
\end{proof}
\begin{lemma}\label{lemma-triangle}
Let $\diamondsuit:{\cal K}^n\to{\cal K}^n$ be a monotonic map with the property that for any line segment that contains $o$, $\diamondsuit I$ is the $o$-symmetric line segment which is parallel to $I$ and has the same length as $I$. Let $F$ be a 2-dimensional subspace of $\mathbb{R}^n$ and $T\subseteq F$ be an equilateral triangle whose barycenter is at the origin. Then, $V_2((\diamondsuit T)\cap F)\geq V_2(T)$.	
\end{lemma}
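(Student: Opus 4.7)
My plan is to use the three medians of $T$, each of which passes through the origin (the barycenter of $T$) and thus falls under the hypothesis on $\diamondsuit$. I would write $T=\textnormal{conv}\{v_1,v_2,v_3\}\subseteq F$ with $v_1+v_2+v_3=o$ and $|v_i|=R$, and for each $i\in\{1,2,3\}$ define $I_i$ to be the median joining $v_i$ to the midpoint $-v_i/2$ of the opposite side. Then $I_i$ is contained in $T\subseteq F$, passes through $o$, is parallel to $v_i$, and has length $(3/2)R$.

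Applying the hypothesis on $\diamondsuit$ to each $I_i$ will yield that $\diamondsuit I_i$ is the $o$-symmetric segment parallel to $v_i$ of length $(3/2)R$, namely $\diamondsuit I_i=\{s v_i : s\in[-3/4,3/4]\}\subseteq F$. Monotonicity of $\diamondsuit$ and $I_i\subseteq T$ then give $\diamondsuit I_i\subseteq (\diamondsuit T)\cap F$ for each $i$. Since $(\diamondsuit T)\cap F$ is convex (as the intersection of two convex sets), it must contain the convex hull of the six endpoints $\pm(3/4)v_i$. Because $v_1,v_2,v_3$ are equally spaced at $120^\circ$ in $F$, these six points form the vertices of a regular hexagon $H\subseteq F$ centered at $o$ with circumradius $3R/4$.

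The proof would then reduce to the elementary area comparison between a regular hexagon of circumradius $3R/4$ and an equilateral triangle of circumradius $R$ in the same plane; this gives the ratio $9/8$, so $V_2(H)=(9/8)V_2(T)>V_2(T)$, and combined with $H\subseteq (\diamondsuit T)\cap F$ this yields the desired inequality (in fact with strict inequality). The only substantive point in the argument is the geometric observation that the $o$-symmetrized versions of the three medians together span a hexagon strictly larger than $T$; I do not foresee any serious obstacle beyond recognizing this picture, since the rest of the argument is a purely formal matter of invoking monotonicity of $\diamondsuit$ and convexity of the intersection.
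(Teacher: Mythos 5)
Your proof is correct and takes essentially the same route as the paper: symmetrize the three medians through the barycenter, observe that their images generate a regular hexagon of circumradius $3R/4$ inside $(\diamondsuit T)\cap F$, and compute the area ratio $9/8>1$. The only cosmetic difference is that you parametrize by the circumradius $R=|v_i|$ whereas the paper works with the side length $a$, but the computation and conclusion are identical.
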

\begin{proof}
Let $v_1,v_2,v_3$ be the vertices of $T$. Set $a:=|v_1-v_2|=|v_2-v_3|=|v_1-v_3|$. Then, the distance of $v_i$ from the side of $T$ which is opposite to $v_i$ equals $a\sqrt{3}/2$, $i=1,2,3$. It follows that $V_2(T)=\sqrt{3}a^2/8$. Furthermore, for each $i\in\{1,2,3\}$, $T$ contains a line segment parallel to $[o,v_i]$, of length $\sqrt{3}a/2$. Thus, for $i\in\{1,2,3\}$, the line segment $(\sqrt{3}a/(4|v_i|)[-v_i,v_i]$ is contained in $(\diamondsuit T)\cap F$. In other words, since $(\diamondsuit T)\cap F$ is convex, it contains the regular hexagon $H$ with vertices $\pm \sqrt{3}av_i/(4|v_i|)$, $i=1,2,3$. Let $S$ be the equilateral triangle with vertices $\sqrt{3}av_1/(4|v_1|)$, $\sqrt{3}av_2/(4|v_2|)$ and $o$. Since, clearly,
$$V_2(H)=6V_2(S)=\frac{6\sqrt{3}}{8}\left(\frac{\sqrt{3}a}{4}\right)^2=\frac{18}{16}\frac{\sqrt{3}a^2}{8}>\frac{\sqrt{3}a^2}{8}=V_2(T),$$
we conclude that $V_2((\diamondsuit T)\cap F)\geq V_2(H)>V_2(T)$, as claimed.
\end{proof}
\textit{}\\
Proof of Theorem \ref{thm-v_j}.\\
Assume that there exists such a $\diamondsuit$. If ${\cal B}={\cal K}^n$, we know from Proposition \ref{prop-v_j-1} that the restriction of $\diamondsuit$ onto ${\cal K}_n^n$ is a symmetrization $:{\cal K}_n^n\to {\cal K}_{n,H}^n$ (which is also monotonic, invariant on $H$-symmetric sets and $V_j$-preserving). Therefore, we may assume that ${\cal B}={\cal K}_n^n$. Denote by $\overline{\diamondsuit}$ the natural extension of $\diamondsuit$. We know by Lemma \ref{lemma-natural-extension} that $\overline{\diamondsuit}$ is also monotonic, invariant on $H$-symmetric sets and $V_j$-preserving. Moreover, it follows by Proposition \ref{prop-v_j-2} that for every line segment $I$, that contains the origin, $\overline{\diamondsuit}I$ is the $o$-symmetric line segment, which is parallel to $I$ and has the same length as $I$. Furthermore, Lemma \ref{lemma-subspaces} implies that if $G$ is a subspace of $H^\perp$ of dimension $j$ (recall that $j\leq n-i=\textnormal{dim}H^\perp$), then for every $K\subseteq G$, $K\in {\cal K}^n$, we have $\overline{\diamondsuit}K\subseteq G$. Thus, if $\diamondsuit'$ is the restriction of $\overline{\diamondsuit}$ onto the family of convex subsets of $G$, then $\diamondsuit':{\cal K}^j\to{\cal K}^j_{\{o\}}$ is a monotonic, invariant of $H$-symmetric sets and $V_j$-preserving $o$-symmetrization, where we identify $G$ with $\mathbb{R}^j$. We finally conclude that in order to prove Theorem \ref{thm-v_j}, we may assume that $j=n$, $i=0$ and that for every segment $I$ through the origin, $\diamondsuit I$ is the $o$-symmetric line segment that is parallel to $I$ and has the same length as $I$.

Let $\{e_1,\dots,e_n\}$ be an orthonormal basis of $\mathbb{R}^n$ and $T\subseteq F:=\textnormal{span}\{e_1,e_2\}$ be an equilateral triangle whose barycenter is at the origin. Consider the convex body $$K:=\textnormal{conv}(T\cup [-e_3,e_3]\cup\dots\cup[-e_n,e_n]),$$where ``conv'' denotes the convex hull.
Notice that by Lemma \ref{lemma-subspaces}, we have $\diamondsuit T\subseteq F$. Then, by the monotonicity of $\diamondsuit$ and the fact that $[-e_i,e_i]$ is $H$-symmetric, $i=3,\dots,n$, we obtain
$$\diamondsuit K\supseteq \textnormal{conv}(\diamondsuit T\cup [-e_3,e_3]\cup\dots\cup[-e_n,e_n])=\textnormal{conv}(((\diamondsuit T)\cap F)\cup [-e_3,e_3]\cup\dots\cup[-e_n,e_n])=:K'.$$
On the other hand, it is clear that
$$V_n(K)=\frac{2^{n-1}}{ n!}V_2(T)$$
and
$$V_n(K')=\frac{2^{n-1}}{n!}V_2((\diamondsuit T)\cap F),$$
which gives (since $\diamondsuit$ is volume preserving) $V_2(T)=V_2((\diamondsuit T)\cap F)$. However, Lemma \ref{lemma-triangle} implies  $V_2(T)<V_2((\diamondsuit T)\cap F)$, which is a contradiction, and the proof is complete. $\square$
\section{Characterizations of Minkowski symmetrization}\label{s-V_1}
We start with the following definition.
\begin{definition}\label{def-segment-property}Let $H$ be a subspace in $\mathbb{R}^n$ and $j\in\{1,\dots,n-1\}$. A symmetrization $\diamondsuit:{\cal K}^n\to{\cal K}^n_H$ is called \textit{canonical} if it maps line segments, contained in a translate of $H^\perp$, to line segments. If $\diamondsuit\{x\}=\{x\}$, for all $x\in H$, then $\diamondsuit$ is called $H$-\textit{invariant}. A symmetrization $\diamondsuit:{\cal K}_n^n\to {\cal K}_{n,H}^n$ is called canonical (resp. $H$-invariant) if its natural extension is canonical (resp. $H$-invariant).
\end{definition}
\hspace*{1.5em}The main goal of this section is to establish the following:
\begin{theorem}\label{thm3}
Let $n\in \mathbb{N}$, $i\in\{0,1\dots,n-1\}$, $H\in G _{n,i}$ and $\diamondsuit:{\cal B}\to{\cal B}_H$ a monotonic symmetrization, which is invariant on $H$-symmetric sets and mean width preserving, where ${\cal B}={\cal K}^n$ or ${\cal K}_n^n$. If, in addition, $\diamondsuit$ is canonical, then $\diamondsuit$ is invariant under translations orthogonal to $H$ of $H$-symmetric sets.
\end{theorem}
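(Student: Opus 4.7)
The plan is to show $\diamondsuit(K+x) \supseteq K$ for every $K \in \mathcal{B}_H$ and $x \in H^\perp$; combined with $V_1$-preservation and the strict monotonicity of $V_1$ on $\mathcal{K}^n$, this will give equality. I would first reduce to the case $\mathcal{B} = \mathcal{K}^n$. Indeed, in the case $\mathcal{B} = \mathcal{K}_n^n$ the natural extension $\overline{\diamondsuit}$ lives on $\mathcal{K}^n$ and inherits monotonicity, $V_1$-preservation, and invariance on $H$-symmetric sets from Lemma \ref{lemma-natural-extension}(i), (ii), (iv), while being canonical by Definition \ref{def-segment-property}; once the theorem is proved for $\overline{\diamondsuit}$, part (vi) of the same lemma (using the $V_1$-preservation of $\diamondsuit$) transfers the translation invariance back to $\diamondsuit$.

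The technical heart will be a description of $\diamondsuit$ on line segments contained in a translate of $H^\perp$. Writing such a segment as $I = y + x + [-re, re]$ with $y \in H$, $x, e \in H^\perp$, $|e| = 1$, $r > 0$, the canonical hypothesis makes $\diamondsuit I$ a segment, $V_1$-preservation pins its length to $2r$, and Lemma \ref{lemma-subspaces} applied to the $H$-symmetric translate $y + H^\perp$ combined with the $H$-symmetry of the output forces $\diamondsuit I$ to be centered at $y$ and to lie in $y + H^\perp$. The crucial claim is that its direction is $\pm e$. In the nontrivial case where $x \not\parallel e$ (the parallel case is immediate, as the $H$-symmetric hull of $I$ then lies in the $1$-dimensional line $y + \mathbb{R}e$), for each $s > 2r$ I would form the enlarged segment $I_s := \operatorname{conv}(I \cup (I + se))$, of length $s + 2r$ on the line $y + x + \mathbb{R}e$. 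Monotonicity gives $\diamondsuit I \subseteq \diamondsuit I_s$, and the same analysis applied to $I_s$ yields that $\diamondsuit I_s$ is a segment centered at $y$ of length $s + 2r$ in some direction $v'(s) \in H^\perp$. Since two $y$-centered segments in $y + H^\perp$ can be nested only when they are collinear, the direction $v$ of $\diamondsuit I$ must be parallel to $v'(s)$ for every $s$. On the other hand, monotonicity with the $H$-symmetric hull of $I_s$---a parallelogram in the $H$-symmetric $2$-plane $y + \operatorname{span}(x,e)$---constrains $v'(s)$ to a cone that narrows about $\pm e$: in the orthonormal basis $(e, f)$ of $\operatorname{span}(x,e)$ with $f = x_\perp/|x_\perp|$, where $x_\perp \neq 0$ is the component of $x$ orthogonal to $e$, a direct Minkowski-sum computation shows that the $f$-component of $v'(s)$ has absolute value at most $2|x_\perp|/(s+2r)$. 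Since $v$ is independent of $s$, sending $s \to \infty$ forces $v = \pm e$, and hence $\diamondsuit I = y + [-re, re]$.

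With the segment lemma in hand, the conclusion is immediate: for each $z \in K$, write $z = y + z'$ with $y = z|H$ and $z' \in H^\perp$. The $H$-symmetry and convexity of $K$ give $[z, z^H] = y + [-z', z'] \subseteq K$, and hence $[z, z^H] + x \subseteq K + x$; applying the segment lemma with $e = z'/|z'|$ and $r = |z'|$ (and treating $z' = 0$ directly) yields $\diamondsuit([z, z^H] + x) = [z, z^H]$, so monotonicity gives $z \in \diamondsuit(K + x)$. Thus $\diamondsuit(K+x) \supseteq K$, and $V_1$-preservation together with strict monotonicity of $V_1$ on $\mathcal{K}^n$ upgrades the inclusion to equality.

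I expect the direction-pinning step to be the main obstacle. Canonical, $V_1$-preservation, and invariance on $H$-symmetric sets leave the direction of $\diamondsuit I$ undetermined within a full one-parameter family whenever $x$ is not parallel to $e$, and resolving this requires the ``long neighbour'' device of absorbing $I$ into $\operatorname{conv}(I \cup (I + se))$: as $s \to \infty$ the $H$-symmetric hull of this enlargement becomes an arbitrarily thin parallelogram, which funnels the admissible direction of $\diamondsuit I_s$---and hence of $\diamondsuit I$---into an arbitrarily narrow angle about $\pm e$.
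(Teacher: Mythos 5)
Your proof is correct, and it takes a genuinely different route from the paper's. The paper first reduces the statement to a two-dimensional problem (Lemma \ref{lemma-reduction-2}, via Lemma \ref{lemma-reduction-1}) with $H=\{o\}$, then argues by contradiction: assuming $\diamondsuit(J+x)\neq J$, one shows (by nesting $I$ into a longer segment through $J+x$) that every segment $I$ on the line $\varepsilon\supseteq J+x$ is mapped to a centered segment of the same length on a fixed non-parallel line $\varepsilon'$, and then Lemma \ref{lemma-mink-sym-plane-geom} produces a segment $I\subseteq\varepsilon$ together with an $o$-symmetric parallelogram $K$ (the central hull $\operatorname{conv}(I\cup(-I))$) containing $I$ but not the rotated image, which violates monotonicity since $\diamondsuit K=K$. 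Your argument instead works directly: rather than exhibiting a single violating body, you stretch the test segment to $I_s=\operatorname{conv}(I\cup(I+se))$, observe that $\diamondsuit I_s$ must be a $y$-centered segment of length $s+2r$ trapped in the $H$-symmetric parallelogram hull $P_s$, and read off that the transversal component of its direction is $O(1/s)$; since nested $y$-centered segments are collinear, the direction of $\diamondsuit I$ is pinned to $\pm e$ in the limit $s\to\infty$. Both proofs hinge on the same geometric kernel --- the $H$-symmetric hull of a segment becomes a thin parallelogram when the segment is long relative to its offset --- but the paper packages it as a one-shot contradiction via a planar lemma, whereas you package it as a quantitative limiting argument that avoids both the explicit 2D reduction and the contradiction. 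Your route is slightly more computational (the cone-narrowing estimate) but more constructive; the paper's is shorter once Lemma \ref{lemma-mink-sym-plane-geom} is in hand and isolates the planar geometry cleanly. One small point worth spelling out in a final write-up is the degenerate case $z'=0$ in the last step, which you flag but don't carry out: there $\{z\}+x$ is a point in $z+H^\perp$, so Lemma \ref{lemma-subspaces} plus $H$-symmetry plus $V_1$-preservation force $\diamondsuit(\{z\}+x)=\{z\}$.
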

First we will need the following result established in \cite{Bi-Ga-Gr} for $i>0$. Below, we will also need the case $i=0$, so we include it here.
\begin{theorem}\label{old-thm}
Let $n\in \mathbb{N}$, $i\in\{0,1,\dots,n-1\}$ and $H\in G _{n,i}$. Let ${\cal B}={\cal K}^n$ or ${\cal K}^n_n$. If $\diamondsuit:{\cal B}\to{\cal B}_H$ is a monotonic, mean width preserving symmetrization, invariant on $H$-symmetric sets and invariant under translations orthogonal to $H$ of $H$-symmetric sets, then $\diamondsuit=M_H$.
\end{theorem}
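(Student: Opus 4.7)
For $i\ge 1$ the statement is precisely the theorem of \cite{Bi-Ga-Gr} being invoked, so the task is only to treat the new case $i=0$. When $H=\{o\}$ we have $H^{\perp}=\mathbb{R}^n$, so the hypotheses read: $\diamondsuit$ is monotonic, mean-width preserving, fixes every $o$-symmetric body in $\mathcal{B}$, and satisfies $\diamondsuit(A+x)=A$ for every $o$-symmetric $A\in\mathcal{B}$ and every $x\in\mathbb{R}^n$. The goal is to show $\diamondsuit K=M_{\{o\}}K=\tfrac{1}{2}(K+(-K))$ for all $K\in\mathcal{B}$.

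The plan is first to establish the inclusion $\diamondsuit K\subseteq M_{\{o\}}K$ one support hyperplane at a time, by a single-slab trick. Fix $u\in\mathbb{S}^{n-1}$ and set $r_u:=\tfrac{1}{2}(h_K(u)+h_K(-u))=h_{M_{\{o\}}K}(u)$ and $c_u:=\tfrac{1}{2}(h_K(u)-h_K(-u))\,u$, so that the smallest slab orthogonal to $u$ containing $K$ is $S_u+c_u$, where $S_u:=\{x:|\langle x,u\rangle|\le r_u\}$ is centered. Choose $R$ large enough that $RB_2^n\supseteq K-c_u$ and $R\ge r_u$, and put $L:=S_u\cap RB_2^n$. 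Then $L$ is $o$-symmetric and full-dimensional, hence lies in $\mathcal{B}_H$ in either setting $\mathcal{B}=\mathcal{K}^n$ or $\mathcal{K}_n^n$; moreover $L+c_u\supseteq K$ and $h_L(u)=r_u$. Monotonicity together with the orthogonal-translation invariance of $o$-symmetric sets yields $\diamondsuit K\subseteq\diamondsuit(L+c_u)=L$, so $h_{\diamondsuit K}(u)\le h_L(u)=r_u=h_{M_{\{o\}}K}(u)$. Since $u\in\mathbb{S}^{n-1}$ was arbitrary, $\diamondsuit K\subseteq M_{\{o\}}K$.

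To promote this inclusion to equality, I would invoke the mean-width hypothesis. Since $W$ is translation invariant, Minkowski additive, positively homogeneous and reflection symmetric, $W(M_{\{o\}}K)=\tfrac{1}{2}(W(K)+W(-K))=W(K)=W(\diamondsuit K)$. Because $V_1$ is strictly monotonic on all of $\mathcal{K}^n$ (as noted in the introduction), the strict inclusion $\diamondsuit K\subsetneq M_{\{o\}}K$ would force $W(\diamondsuit K)<W(M_{\{o\}}K)$, a contradiction; therefore $\diamondsuit K=M_{\{o\}}K$.

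I do not foresee a substantive obstacle: the whole argument reduces to the single-direction slab construction plus strict monotonicity of $V_1$. The only mildly delicate point is the choice of the auxiliary body $L$, where $R$ must be taken large enough both so that $L+c_u\supseteq K$ and so that $L$ is full-dimensional (the latter being needed to guarantee $L\in\mathcal{B}$ when $\mathcal{B}=\mathcal{K}_n^n$); both requirements are met simultaneously for any $R\ge\max\{r_u,\sup_{x\in K-c_u}|x|\}$.
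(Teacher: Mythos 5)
Your proof is correct and follows essentially the same route as the paper: for $i\ge1$ both defer to \cite{Bi-Ga-Gr}, and for $i=0$ both bound $h_{\diamondsuit K}(u)$ by enclosing $K$ in a translate of an $o$-symmetric body that is tight in the directions $\pm u$, use invariance under translations of $H$-symmetric sets to symmetrize that body, deduce $\diamondsuit K\subseteq M_{\{o\}}K$, and finish with $V_1$-preservation. The only cosmetic difference is your choice of auxiliary body (a slab intersected with a large ball) where the paper uses a circumscribed box; the roles are interchangeable.
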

\begin{proof}
As mentioned earlier, the case $i>0$ is already settled in \cite{Bi-Ga-Gr}, thus we only have to prove our claim for $i=0$. Let $K\in{\cal B}$, $u\in \mathbb{S}^{n-1}$. Then, there exists a box $P$ that contains $K$, one of its facets is contained in the supporting hyperplane of $K$ whose unit normal vector is $u$ and one of its facets is contained in the supporting hyperplane of $K$ whose unit normal vector is $-u$. Then it is clear that $h_K(\pm u)=h_P(\pm u)$. Since $P$ is a translate of an $o$-symmetric set, we have:
$$h_{\diamondsuit K}(u)\leq h_{\diamondsuit P}(u)=\frac{h_P(u)+h_P(-u)}{2}=\frac{h_K(u)+h_K(-u)}{2}.$$
The latter inequality is true for all $u\in \mathbb{S}^{n-1}$, thus
$$\diamondsuit K\subseteq \frac{1}{2}K+\frac{1}{2}(-K)=M_{\{o\}}K$$and since $\diamondsuit$ is $V_1$-preserving, we conclude that $\diamondsuit K=M_{\{o\}}K$, as asserted.
\end{proof}

Combining Theorems \ref{thm3} and \ref{old-thm}, we immediately obtain:
\begin{corollary}\label{cor-mink-sym-1}
Let $n\in \mathbb{N}$, $i\in\{0,1\dots,n-1\}$, $H\in G _{n,i}$ and $\diamondsuit:{\cal B}\to{\cal B}_H$ a monotonic symmetrization, which is invariant on $H$-symmetric sets and mean width preserving, where ${\cal B}={\cal K}^n$ or ${\cal K}_n^n$. If, in addition, $\diamondsuit$ is canonical, then $\diamondsuit=M_H$.
\end{corollary}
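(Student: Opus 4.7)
The plan is to obtain this corollary as an immediate combination of the two theorems just stated. The corollary's hypotheses, namely that $\diamondsuit:{\cal B}\to{\cal B}_H$ is monotonic, mean width preserving, invariant on $H$-symmetric sets, and canonical, match exactly the hypotheses of Theorem \ref{thm3}. So the first step is simply to invoke Theorem \ref{thm3} to conclude that $\diamondsuit$ is invariant under translations orthogonal to $H$ of $H$-symmetric sets. This is the content of the entire section; the canonical assumption is used precisely at this stage.

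Once this extra invariance is established, the second step is to apply Theorem \ref{old-thm}. Its hypotheses are monotonicity, mean width preservation, invariance on $H$-symmetric sets, and invariance under translations orthogonal to $H$ of $H$-symmetric sets. The first three are given by the corollary, and the fourth is precisely what Theorem \ref{thm3} produced in the previous step. Thus Theorem \ref{old-thm} applies and yields $\diamondsuit=M_H$, which is the desired conclusion. Note that Theorem \ref{old-thm} applies uniformly in $i$ (the case $i=0$ being included in its statement), so no case split in $i$ is required.

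There is essentially no obstacle beyond checking that the hypothesis lists line up, which they do verbatim. One small bookkeeping point worth noting is that the canonical assumption is consumed entirely at the first step and is not needed in the application of Theorem \ref{old-thm}; similarly, the conclusion is the same for ${\cal B}={\cal K}^n$ and ${\cal B}={\cal K}_n^n$ because both Theorem \ref{thm3} and Theorem \ref{old-thm} are stated in that generality. The whole argument therefore reduces to a two-line deduction.
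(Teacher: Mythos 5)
Your proof is correct and coincides exactly with the paper's: the paper states the corollary as an immediate consequence of combining Theorem \ref{thm3} and Theorem \ref{old-thm}, which is precisely your two-step deduction.
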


For the proof of Theorem \ref{thm3}, we will need some geometric lemmas.
The next lemma follows immediately from the definitions and its proof is omitted.
\begin{lemma}\label{lemma-mink-sym-no-proof}
Let $H$ be a proper subspace of $\mathbb{R}^n$. The following statements hold:
\begin{enumerate}[i)]
\item A line segment $I$ is $H$-symmetric if and only if for every straight line $\varepsilon\subseteq H$, such that $\varepsilon\cap I\neq \emptyset$, $I$ is $\varepsilon$-symmetric.
\item Let $K\in{\cal K}^n$ be an $H$-symmetric set. Then,
$$K=\bigcup\{I:I\textnormal{ is an }H\textnormal{-symmetric segment contained in }K\}.$$
 \end{enumerate}
\end{lemma}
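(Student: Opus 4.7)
The plan is to prove (ii) first by a one-line pointwise construction, and then tackle (i) by a structural analysis of $H$-symmetric segments. For (ii), for each $x\in K$ set $I_x:=[x,\sigma_H(x)]$, where $\sigma_H$ denotes the orthogonal reflection across $H$. Since $K$ is $H$-symmetric and convex, $\sigma_H(x)\in K$ and hence $I_x\subseteq K$; moreover $I_x$ is $H$-symmetric because $\sigma_H$ swaps its endpoints (or fixes the singleton when $x\in H$). As $x\in I_x$, the union of all such segments equals $K$.

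For (i), I would begin with the structural observation that any $H$-symmetric segment $I=[a,b]$ satisfies $\sigma_H(\{a,b\})=\{a,b\}$, so either both endpoints lie in $H$ (whence $I\subseteq H$) or $\sigma_H(a)=b$, forcing $b-a\in H^\perp$ and the midpoint $(a+b)/2\in H$. The forward direction of (i) then reduces to the nontrivial case of a segment $I=\{x+tv:t\in[-s,s]\}$ perpendicular to $H$ with midpoint $x\in H$ and $v\in H^\perp$. Any line $\varepsilon\subseteq H$ meeting $I$ must pass through the unique point $x=I\cap H$, and because $v\perp\varepsilon$, a short computation yields $\sigma_\varepsilon(x+tv)=x-tv$, so $\sigma_\varepsilon(I)=I$.

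For the backward direction, choose $p\in I\cap H$ and parametrize $I=\{p+tw:t\in[\alpha,\beta]\}$. Every line $\varepsilon\subseteq H$ through $p$ meets $I$ at $p$, so by hypothesis $\sigma_\varepsilon(I)=I$ for each such $\varepsilon$. Invariance under $\sigma_\varepsilon$ forces $w$ to be an eigenvector of the linear part of $\sigma_\varepsilon$, i.e.\ either $w\parallel\varepsilon$ or $w\perp\varepsilon$; in the latter case $p$ must be the midpoint of $I$. Sweeping over all lines through $p$ in $H$, the only consistent solution is $w\in H^\perp$ with $p$ the midpoint of $I$, which gives $H$-symmetry of $I$.

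The main obstacle is not any deep geometric fact but the bookkeeping of a handful of degenerate cases (a single-point segment, $I$ entirely contained in $H$, or $\dim H=1$ where the candidate line through $p$ in $H$ is unique). Each is handled by direct inspection rather than new ideas, consistent with the author's remark that the lemma follows immediately from the definitions.
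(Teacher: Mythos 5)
Your proof of (ii) is correct and is the natural one: for each $x\in K$, the segment $[x,\sigma_H(x)]$ (with $\sigma_H$ the reflection about $H$) is $H$-symmetric, lies in $K$ by convexity of $K$ together with $\sigma_H(x)\in K$, and contains $x$. The paper omits the proof, so there is nothing to compare against, but this is exactly the expected argument.

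For (i), your computation for a segment perpendicular to $H$ with midpoint in $H$ is sound, but the cases you relegate to ``direct inspection'' do not all work out, and one of them is in fact a counterexample to the stated equivalence rather than a benign degeneracy. Take $H=\textnormal{span}\{e_1,e_2\}\subseteq\mathbb{R}^3$ and $I=[o,e_1]\subseteq H$. Since $\sigma_H$ fixes $H$ pointwise, $I$ is $H$-symmetric; yet the line $\varepsilon=\mathbb{R}e_2\subseteq H$ meets $I$ at the origin and $\sigma_\varepsilon(e_1)=-e_1$, so $\sigma_\varepsilon(I)=[-e_1,o]\neq I$. Thus the ``only if'' direction fails whenever $I$ is a nondegenerate segment contained in $H$ and $\dim H\ge 2$ (a line in $H$ through an endpoint of $I$, orthogonal to $I$, never leaves $I$ invariant). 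Likewise, your ``if'' direction tacitly assumes $I\cap H\neq\emptyset$: you begin with ``choose $p\in I\cap H$'', but if $I$ is disjoint from $H$ (or if $H=\{o\}$, so that no line lies in $H$) the hypothesis is vacuously satisfied while $I$ need not be $H$-symmetric. Taken literally, (i) is correct only for segments meeting $H$ in a single point, which are exactly the segments $[x,\sigma_H(x)]$ produced in part (ii) and used in Corollary~\ref{cor-mink-last}; this is evidently the class the author has in mind, but your write-up should flag the discrepancy rather than assert that the remaining cases go through by inspection.
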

\begin{lemma}\label{lemma-reduction-1}
Let $n\in \mathbb{N}$, $i\in\{0,1\dots,n-1\}$, $H\in G _{n,i}$ and $\diamondsuit:{\cal K}^n\to{\cal K}^n_H$ a monotonic $V_j$-preserving map, for some $j\in\{1,\dots,n\}$. If for any symmetric line segment $J$ and for any $x\in H^{\perp}$, it holds $\diamondsuit(J+x)=J$, then $\diamondsuit$ is invariant under translations orthogonal to $H$ of $H$-symmetric sets.
\end{lemma}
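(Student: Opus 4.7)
The plan is to fix $K \in {\cal K}^n_H$ and $x \in H^\perp$ and prove $\diamondsuit(K+x) = K$ by establishing the two inclusions separately. The first will follow directly from the segment hypothesis together with Lemma \ref{lemma-mink-sym-no-proof}(ii), and the second from $V_j$-preservation combined with a monotone approximation by thickenings.

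For the inclusion $K \subseteq \diamondsuit(K+x)$, I will invoke Lemma \ref{lemma-mink-sym-no-proof}(ii) to write the $H$-symmetric body $K$ as the union of all $H$-symmetric segments $J$ it contains. For each such $J$, the inclusion $J + x \subseteq K + x$ together with the monotonicity of $\diamondsuit$ yields $\diamondsuit(J + x) \subseteq \diamondsuit(K + x)$, and the hypothesis identifies $\diamondsuit(J+x) = J$. Taking the union over all such $J$ produces $K \subseteq \diamondsuit(K+x)$.

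For the reverse inclusion, I will first handle the case $K \in {\cal K}_n^n \cap {\cal K}^n_H$: combining $K \subseteq \diamondsuit(K+x)$ with $V_j(\diamondsuit(K+x)) = V_j(K+x) = V_j(K)$ and the strict monotonicity of $V_j$ on ${\cal K}_n^n$ forces $\diamondsuit(K+x) = K$. For a general (possibly not full-dimensional) $H$-symmetric $K$, I consider the thickening $K_\varepsilon := K + \varepsilon B_2^n$, which is still $H$-symmetric (since $B_2^n$ is) and now lies in ${\cal K}_n^n$. Applying the previous case to $K_\varepsilon$ yields $\diamondsuit(K_\varepsilon + x) = K_\varepsilon$, and monotonicity then gives $\diamondsuit(K+x) \subseteq \diamondsuit(K_\varepsilon + x) = K_\varepsilon$ for every $\varepsilon > 0$; intersecting over $\varepsilon$ produces $\diamondsuit(K+x) \subseteq K$. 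The only mildly delicate point is this reduction to the full-dimensional case, which is needed because $V_j$ is only guaranteed to be strictly monotonic on ${\cal K}_n^n$ when $j \geq 2$; the thickening by $\varepsilon B_2^n$ handles this uniformly in $j$ and in $K$.
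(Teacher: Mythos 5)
Your proof is correct, and the first half (establishing $K \subseteq \diamondsuit(K+x)$ via the segment decomposition of Lemma \ref{lemma-mink-sym-no-proof}(ii), the segment hypothesis, and monotonicity) is precisely the argument in the paper. Where you diverge is in the finishing step: the paper concludes directly from $V_j(\diamondsuit(K+x)) = V_j(K+x) = V_j(K)$, which together with the inclusion $K \subseteq \diamondsuit(K+x)$ forces equality as long as $V_j$ is strictly monotone at $K$; that is automatic for $j=1$ or for $K\in{\cal K}_n^n$, but the written argument does not address the case $\dim K < j$, where both $V_j$-values vanish and the equality $V_j(\diamondsuit(K+x))=V_j(K)$ is vacuous. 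Your thickening argument --- passing to $K_\varepsilon = K + \varepsilon B_2^n \in{\cal K}_{n,H}^n$, applying the full-dimensional case to obtain $\diamondsuit(K_\varepsilon + x) = K_\varepsilon$, then using monotonicity and intersecting over $\varepsilon$ --- patches exactly this corner and yields the conclusion uniformly in $j$ and in the dimension of $K$. In short: same idea, but your version is the more complete one; the cost is one extra approximation step, which is cheap since $B_2^n$ is $H$-symmetric so thickening preserves $H$-symmetry.
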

\begin{proof}
Let $K\in{\cal K}^n$ be an $H$-symmetric set and $x\in H^{\perp}$. Note that $K+x$ is $(H+x)$-symmetric. Thus, by Lemma \ref{lemma-mink-sym-no-proof} $(ii)$, we have:
$$K+x=\bigcup\{I+x:I\textnormal{ is an }H\textnormal{-symmetric segment contained in }K\}.$$
Thus, by the fact that $\diamondsuit(I+x)=I$, for any $H$-symmetric segment $I$ and by the monotonicity of $\diamondsuit$, we immediately get: $\diamondsuit(K+x)\supseteq K$. The assertion follows by the fact that $V_j(\diamondsuit(K+x))=V_j(K+x)=V_j(K)$.
\end{proof}

Next, we would like to show how the proof of Theorem \ref{thm3} reduces to a 2-dimensional problem.
\begin{lemma}\label{lemma-reduction-2}
Theorem \ref{thm3} reduces to the following: Let $\diamondsuit:{\cal K}^2\to {\cal K}^2_{\{o\}}$ be a monotonic symmetrization, which is invariant on $o$-symmetric sets, $V_1$-preserving and canonical. Then, for any $o$-symmetric line segment $J$ and for any $x\in\mathbb{R}^2$, it holds $\diamondsuit (J+x)=J$.
\end{lemma}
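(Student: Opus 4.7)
Assume the 2D statement and let $\diamondsuit$ satisfy the hypotheses of Theorem \ref{thm3}. If $\mathcal{B}=\mathcal{K}_n^n$, I pass to the natural extension $\overline{\diamondsuit}$, which inherits monotonicity, invariance on $H$-symmetric sets and $V_1$-preservation by Lemma \ref{lemma-natural-extension} (parts (i), (ii), (iv)), and canonicality by the clause of Definition \ref{def-segment-property} that defines canonicality on $\mathcal{K}_n^n$ through the natural extension; an invariance-under-orthogonal-translations conclusion for $\overline{\diamondsuit}$ transfers back to $\diamondsuit$ by Lemma \ref{lemma-natural-extension}(vi). So I may assume $\mathcal{B}=\mathcal{K}^n$. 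By Lemma \ref{lemma-reduction-1} it then suffices to prove $\diamondsuit(J+x)=J$ for every $H$-symmetric segment $J$ and every $x\in H^\perp$. An elementary analysis of the reflection $z\mapsto z^H$ shows that such a segment either lies in $H$ (Case A) or is parallel to $H^\perp$ with midpoint $m\in H$ (Case B).

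In Case A I start by establishing $\diamondsuit\{z\}=\{z|H\}$ for every $z\in\mathbb{R}^n$: $V_1$-preservation gives $V_1(\diamondsuit\{z\})=0$, so $\diamondsuit\{z\}$ is a single point, which by $H$-symmetry lies in $H$; on the other hand, monotonicity applied to $\{z\}\subseteq[z,z^H]$ together with invariance on $H$-symmetric sets forces $\diamondsuit\{z\}\subseteq[z,z^H]\cap H=\{z|H\}$. Given this identity, for $y\in J\subseteq H$ one has $(y+x)|H=y$, hence $\diamondsuit\{y+x\}=\{y\}$; monotonicity applied to $\{y+x\}\subseteq J+x$ then yields $y\in\diamondsuit(J+x)$ for every $y\in J$, so $J\subseteq\diamondsuit(J+x)$. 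Since $V_1(\diamondsuit(J+x))=V_1(J+x)=V_1(J)$ and $V_1$ is strictly increasing on $\mathcal{K}^n$, equality follows. Case A therefore needs neither canonicality nor the 2D statement.

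In Case B write $J=m+[-s/2,s/2]v$ with $v\in\mathbb{S}^{n-1}\cap H^\perp$ and set $F:=m+\mathrm{span}(v,x)$. A direct check shows $F$ is $H$-symmetric, $F\cap H=\{m\}$, and $\dim F\le 2$. For any $K\in\mathcal{K}(F)$, the set $\mathrm{conv}(K,K^H)$ lies in $F$ and is $H$-symmetric, so invariance on $H$-symmetric sets together with monotonicity force $\diamondsuit K\subseteq F$. Hence $\diamondsuit$ restricts to a map $\mathcal{K}(F)\to\mathcal{K}_{\{m\}}(F)$, and composing with the translation $K\mapsto K-m$ produces an $o$-symmetrization $\diamondsuit'$ on the linear space $F-m$ of dimension at most $2$. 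I then verify that $\diamondsuit'$ inherits monotonicity, $V_1$-preservation and invariance on $o$-symmetric sets directly, while its canonicality relies on the geometric observation that every segment of $F$ has direction in $\mathrm{span}(v,x)\subseteq H^\perp$ and is thus contained in a translate of $H^\perp$, so that canonicality of $\diamondsuit$ produces a segment. If $\dim F=2$, the 2D statement applied to $\diamondsuit'$ gives $\diamondsuit'((J-m)+x)=J-m$, and translating by $+m$ yields $\diamondsuit(J+x)=J$; if $\dim F\le 1$, the same conclusion is forced directly by $V_1$-preservation and $m$-symmetry on a line.

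The main obstacle will be the bookkeeping in Case B, in particular the transfer of canonicality from $\diamondsuit$ to $\diamondsuit'$, which depends crucially on selecting $F=m+\mathrm{span}(v,x)$ with $v,x\in H^\perp$ so that every direction in $F$ lies in $H^\perp$; the verification that $\diamondsuit_{|F}$ actually takes values in $F$ (so that $\diamondsuit'$ is even well defined) is also where the $H$-symmetry of the ambient slice $F$ is used. Case A, by contrast, is a short singleton-plus-$V_1$ argument that does not invoke the 2D statement at all.
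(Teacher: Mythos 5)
Your proposal is correct and follows the paper's high-level route: reduce from ${\cal K}_n^n$ to ${\cal K}^n$ by passing to the natural extension (Lemma \ref{lemma-natural-extension} plus the convention in Definition \ref{def-segment-property}), invoke Lemma \ref{lemma-reduction-1} to reduce the conclusion to ``$\diamondsuit(J+x)=J$ for every $H$-symmetric segment $J$ and every $x\in H^\perp$,'' and then isolate a two-dimensional slice $F$ perpendicular to $H$ on which $\diamondsuit$ restricts (after translating $F\cap H$ to the origin) to a planar $o$-symmetrization satisfying the hypotheses of the two-dimensional statement. The verifications you supply for why $\diamondsuit$ preserves the slice $F$, why the restricted map inherits $o$-symmetry, $V_1$-preservation, and canonicality, and why the translation-conjugated map matches the 2D hypotheses, are all sound.

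Where you genuinely diverge from (and improve on) the paper is the explicit case split on the structure of an $H$-symmetric segment $J$: either $J\subseteq H$, or $J$ is perpendicular to $H$ with midpoint in $H$. The paper's ``in other words'' sentence only encodes the second type (an $(H\cap F)$-symmetric segment inside a 2-plane $F$ perpendicular to $H$ is necessarily centered at the point $F\cap H$ with direction in $H^\perp$); a non-degenerate segment $J\subseteq H$ cannot sit inside such an $F$, so the paper's reformulation silently drops it. Your Case~A fills that in with a short self-contained argument: $V_1(\diamondsuit\{z\})=0$ forces $\diamondsuit\{z\}$ to be a point, $H$-symmetry forces it into $H$, and monotonicity against the $H$-symmetric segment $[z,z^H]$ pins it to $\{z\vert H\}$; then $J\subseteq\diamondsuit(J+x)$ and strict monotonicity of $V_1$ finish. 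This is the right fix, it uses neither canonicality nor the 2D statement, and together with your separate treatment of the degenerate slice $\dim F\le 1$ it makes the reduction airtight. The paper's version is terser but, read literally, incomplete at exactly this point.
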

\begin{proof}
First assume that we have proved Theorem \ref{thm3} for ${\cal B}={\cal K}^n$. Let $\diamondsuit:{\cal K}^n_n\to{\cal K}^n_n$ be a symmetrization that satisfies the conditions of Theorem \ref{thm3}. Then, Lemma \ref{lemma-natural-extension} guarantees that its natural extension $\overline{\diamondsuit}:{\cal K}^n\to{\cal K}_H^n$ also satisfies the conditions of Theorem \ref{thm3}. Consequently (since we assumed that Theorem \ref{thm3} is established for ${\cal B}={\cal K}^n$), $\overline{\diamondsuit}:{\cal K}^n\to{\cal K}_H^n$ is invariant under translations orthogonal to $H$ of $H$-symmetric sets. Hence, again by Lemma \ref{lemma-natural-extension}, $\diamondsuit$ is invariant under translations orthogonal to $H$ of $H$-symmetric sets. Therefore, we may assume that ${\cal B}={\cal K}^n$.

Note that since $V_1$ is clearly strictly increasing, it follows by Theorem \ref{thm2} that $\diamondsuit$ is projection invariant. Also, by Lemma \ref{lemma-reduction-1}, it suffices to show that for any $H$-symmetric line segment $J$ and for any $x\in H^{\perp}$, $\diamondsuit(J+x)=J$.
In other words,
we need to show that for any 2-dimensional affine subspace $F$ 
of $\mathbb{R}^n$, which is perpendicular to $H$, 
for any $(H\cap F)$-symmetric line segment $J\subseteq F$
and for any $x\in F$, we have: $\diamondsuit (J+x)=J$. Since $F$ can be naturally identified with $\mathbb{R}^2$ 
(in that case, $H\cap F=\{o\}$) and since the restriction of $\diamondsuit$ onto the family of compact convex subsets of $F$ satisfies the conditions of Theorem \ref{thm3}, the proof of our lemma is complete.

\end{proof}
\begin{lemma}\label{lemma-mink-sym-plane-geom}
Let $\varepsilon,\varepsilon'$ be two non-parallel straight lines in $\mathbb{R}^2$. Assume that $\varepsilon'$ contains the origin. Then, there exists a line segment $I\subseteq \varepsilon$, an $o$-symmetric line segment $I'\subseteq \varepsilon'$ of the same length as $I$ and an $o$-symmetric convex body $K\subseteq \mathbb{R}^2$ that contains $I$ but does not contain $I'$.
\end{lemma}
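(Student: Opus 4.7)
The plan is to exhibit $K$ explicitly as a thin $o$-symmetric rectangular slab aligned with $\varepsilon$: the slab will be just thick enough (in the direction transverse to $\varepsilon$) to contain $\varepsilon$ itself, while any $o$-symmetric segment on $\varepsilon'$ of sufficiently large length must poke out of it. The non-parallelism assumption will enter precisely as the fact that the transverse component of the direction vector of $\varepsilon'$ is nonzero.

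First I would fix coordinates: let $u$ be a unit vector in the direction of $\varepsilon$, let $v$ be a unit vector in the direction of $\varepsilon'$, and let $n$ be a unit vector orthogonal to $u$. Non-parallelism is equivalent to $\langle v, n \rangle \neq 0$. Writing $c \geq 0$ for the distance from $o$ to $\varepsilon$, we have $\varepsilon \subseteq \{x\in\mathbb{R}^2 : \langle x, n \rangle = c\}$. Now I would choose any $\delta>0$ and pick a length
$$L > \frac{2(c+\delta)}{|\langle v,n\rangle|},$$
set $I$ to be the segment on $\varepsilon$ of length $L$ centered at the foot of the perpendicular from $o$ to $\varepsilon$, and set $I' := \{tv : |t|\leq L/2\}\subseteq\varepsilon'$, which is $o$-symmetric of length $L$.

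Finally, I would define the candidate body as the rectangle
$$K := \bigl\{x\in\mathbb{R}^2 : |\langle x, n\rangle|\leq c+\delta,\ |\langle x, u\rangle|\leq L\bigr\},$$
which is visibly an $o$-symmetric convex body. To check $I\subseteq K$: on $I$ one has $\langle x,n\rangle = c$, so $|\langle x,n\rangle| \leq c+\delta$, while $|\langle x,u\rangle|\leq L/2\leq L$. To check $I'\not\subseteq K$: the endpoint $\tfrac{L}{2}v$ of $I'$ satisfies $|\langle \tfrac{L}{2}v,n\rangle| = \tfrac{L}{2}|\langle v,n\rangle| > c+\delta$ by the choice of $L$.

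I do not anticipate any serious obstacle: once one recodes the non-parallelism hypothesis as $\langle v,n\rangle\neq 0$, the construction and the verification are essentially immediate. The only minor care needed is to handle uniformly the two cases $c=0$ (both lines through the origin) and $c>0$ (only $\varepsilon'$ through the origin), but the same slab $K$ above works in both cases since all that matters for $I\subseteq K$ is that $c\leq c+\delta$.
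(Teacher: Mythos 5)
Your proposal is correct. The underlying idea matches the paper's: build a thin $o$-symmetric body hugging the line $\varepsilon$ (and its reflection through $o$) so that a sufficiently long $o$-symmetric segment on $\varepsilon'$ must protrude. The paper realizes this with an $o$-symmetric parallelogram having $I$ as a side, proves $I'\not\subseteq K$ by comparing the area $2a$ of the parallelogram against the width forced by $I'$ (via the triangle inequality), and has to dispose of the case $o\in\varepsilon$ separately. Your version is cleaner on both counts: the slab $K=\{x:\ |\langle x,n\rangle|\le c+\delta,\ |\langle x,u\rangle|\le L\}$ treats $c=0$ and $c>0$ uniformly, and the exclusion $I'\not\subseteq K$ is a one-line computation at the endpoint $\tfrac{L}{2}v$, with non-parallelism entering exactly as $\langle v,n\rangle\neq 0$. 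The two constructions are geometrically equivalent slab-type bodies, but your choice of a rectangle aligned with $n\perp\varepsilon$ eliminates both the case split and the area argument.
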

\begin{proof}
Fix two non-parallel lines $\varepsilon,\varepsilon'$ in $\mathbb{R}^2$, such that $\varepsilon'\ni o$. Set $a$ to be the distance of $\varepsilon$ from $o$. Denote by $I'$ the $o$-symmetric line segment of length 1 that is contained in $\varepsilon'$. If $\varepsilon$ contains $o$, then we can take $K$ to be the convex hull of the $o$-symmetric line segment of length 1, contained in $\varepsilon$ and the line segment $\frac{1}{2}I'$. Then, $K$ is $o$-symmetric, contains $I$ and does not contain $I'$, as required. Therefore, we may assume that the origin is not contained in $\varepsilon$.

Denote by $\zeta$ the line through $o$, which is orthogonal to $\varepsilon'$. Also, consider the point $A:=\varepsilon\cap \zeta$ and the line segment $I\subseteq \varepsilon$, which is centered at $A$ and has length 1. Set $K$ to be the (unique) $o$-symmetric parallelogram that has $I$ as one of its sides. Let $J$ be one of the sides of $K$ which are not parallel to $I$. Then, $-J$ is a side of $K$, parallel to $J$. Clearly, the area of $K$ equals $2a$. Assume that $K\supseteq I'$. Then, the distance of $J$ and $-J$ is greater than or equal to the length of $I'$, i.e. 1. However, since $\varepsilon,\varepsilon' $ are not parallel, $J$ cannot be perpendicular to $I$. Thus, the triangle inequality implies that the length of $J$ is strictly greater than the distance of $I,-I$, i.e. $2a$. This shows that the area of $K$ is strictly greater than $2a$. We arrived at a contradiction because we assumed that $K$ contains $I'$. Since $K$ trivially contains $I$, our proof is complete.
\end{proof}
\textbf{}\\
Proof of Theorem \ref{thm3}.\\
Let $\diamondsuit:{\cal K}^2\to {\cal K}^2_{\{o\}}$ be a monotonic, mean width preserving symmetrization, which maps line segments to line segments. Fix an $o$-symmetric line segment $J$ and a vector $x\in\mathbb{R}^2$. As Lemma \ref{lemma-reduction-2} shows, to prove Theorem \ref{thm3}, it suffices to prove that $\diamondsuit(J+x)=J$. First note that since $\diamondsuit$ is $V_1$-preserving, the line segments $J$ and $\diamondsuit(J+x)$ have the same length. Assume that $\diamondsuit(J+x)\neq J$.
Then, the segments $\diamondsuit(J+x)$ and $J$ cannot be parallel (otherwise, since they are both $o$-symmetric and of the same length, they would coincide). Set $\varepsilon$ to be the straight line that contains $J+x$ and $\varepsilon'$ to be the straight line that contains $J$. Take any line segment $I\subseteq \varepsilon$. We claim that $\diamondsuit I\subseteq \varepsilon'$. Indeed, $I$ is contained in some line segment $I_0\subseteq \varepsilon$, that contains $J+x$. Consequently,
$\diamondsuit I_0\supseteq \diamondsuit  (J+x)$. Since $\diamondsuit I_0$ is a line segment, $\diamondsuit I_0$ has to be contained in $\varepsilon'$. But then, by monotonicity, $\diamondsuit I\subseteq\diamondsuit I_0\subseteq \varepsilon'$, as claimed.
Thus, we have shown that for any line segment $I\subseteq \varepsilon$, $\diamondsuit I$ is a centered line segment contained in $\varepsilon'$ and has the same length as $I$. However, since the lines $\varepsilon,\varepsilon'$ are not parallel, Lemma \ref{lemma-mink-sym-plane-geom} clearly implies that we can find a line segment $I\subseteq \varepsilon$ and an $o$-symmetric convex body $K$, such that $K$ contains $I$ but does not contain $\diamondsuit I$. Since $\diamondsuit K=K$, this contradicts the monotonicity of $\diamondsuit$. It follows that $\diamondsuit(J+x)=\diamondsuit J$, as required. $\square$\\
\\

The proof Lemma \ref{lemma-reduction-2} shows that if the answer to the following problem is affirmative, then we can remove the simplicity assumption in Theorem \ref{thm3}.
\begin{problem}
Let $\diamondsuit:{\cal K}^2\to {\cal K}^2_{\{o\}}$ be a symmetrization which is monotonic, invariant on $o$-symmetric sets and $V_1$-preserving. Is it true that $\diamondsuit$ is canonical (i.e. maps segments to segments)?
\end{problem}

Before ending this note, we would like to slightly relax the conditions of Corollary \ref{cor-mink-sym-1} as follows:
\begin{corollary}\label{cor-mink-last}
Let $n\in \mathbb{N}$, $i\in\{0,1\dots,n-1\}$, $H\in G _{n,i}$ and $\diamondsuit:{\cal B}\to{\cal B}_H$ a symmetrization which is monotonic, mean width preserving, canonical and $H$-invariant, where ${\cal B}={\cal K}^n$ or ${\cal K}_n^n$. If, in addition, the restriction of 
$\diamondsuit$ onto the family of sets in ${\cal B}$ that lie in $H^{\perp}$
is invariant on $o$-symmetric sets, then $\diamondsuit=M_H$.
\end{corollary}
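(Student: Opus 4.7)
My strategy is to reduce Corollary \ref{cor-mink-last} to Corollary \ref{cor-mink-sym-1} by upgrading the weakened invariance hypotheses to full invariance on $H$-symmetric sets, at which point Corollary \ref{cor-mink-sym-1} immediately delivers $\diamondsuit = M_H$. As a standard first step via Lemma \ref{lemma-natural-extension} and Definition \ref{def-segment-property}, I reduce to $\mathcal{B} = \mathcal{K}^n$: the natural extension of $\diamondsuit$ retains every listed hypothesis (parts (ii), (iv), (v) of the lemma handle monotonicity-type and invariance-type properties, and \emph{canonical} and \emph{$H$-invariant} are preserved by definition), and once $\overline{\diamondsuit} = M_H$ on $\mathcal{K}^n$, part (i) of that lemma with $F = V_1$ propagates the equality back to $\mathcal{K}_n^n$.

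\textbf{Two preliminary enhancements of the hypotheses.} For any convex body $A \subseteq H$, monotonicity combined with $H$-invariance gives $A = \bigcup_{x \in A}\diamondsuit\{x\} \subseteq \diamondsuit A$, and the strict monotonicity of $V_1$ on $\mathcal{K}^n$ together with mean-width preservation forces $\diamondsuit A = A$. Next, if $K \subseteq H^\perp$ then $K$ is contained in some $o$-symmetric ball $r(B_2^n \cap H^\perp)$, which is fixed by hypothesis; monotonicity (exactly as in the proof of Lemma \ref{lemma-subspaces}) then gives $\diamondsuit K \subseteq H^\perp$. Consequently the restriction of $\diamondsuit$ to convex bodies in $H^\perp$ is a monotonic, mean-width preserving, canonical symmetrization of $H^\perp \cong \mathbb{R}^{n-i}$ with respect to $\{o\}$, invariant on $o$-symmetric sets by hypothesis; Corollary \ref{cor-mink-sym-1} applied in this smaller ambient identifies the restriction with $M_{\{o\}}$, and in particular shows that $\diamondsuit$ fixes every $o$-symmetric convex body in $H^\perp$.

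\textbf{Main step and main obstacle.} By Lemma \ref{lemma-mink-sym-no-proof}(ii), invariance on all $H$-symmetric sets reduces to invariance on all $H$-symmetric segments $I$; segments contained in $H$ are handled by the first preliminary above, leaving only $I = J + c$ with $J \subseteq H^\perp$ an $o$-symmetric segment and $c \in H$. For such $I$, the canonical property forces $\diamondsuit I$ to be a segment, $V_1$-preservation fixes its length as $V_1(J)$, and the inclusion $\{c\} = \diamondsuit\{c\} \subseteq \diamondsuit I$ combined with the $H$-symmetry of $\diamondsuit I$ leaves exactly two alternatives: $\diamondsuit I$ either lies in $H$ and passes through $c$, or lies in a translate of $H^\perp$ and is centered at $c$. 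I would rule out the first alternative and pin the direction in the second to that of $J$ by adapting the contradiction argument of Theorem \ref{thm3} via Lemmas \ref{lemma-reduction-2} and \ref{lemma-mink-sym-plane-geom}: canonicity and monotonicity applied to the nested family $r[-v, v] + c$ (with $v$ the direction of $J$ and $r > 0$ variable) force all such segments to be sent by $\diamondsuit$ into a single line $c + \mathbb{R}v'$, and a convex body fixed by $\diamondsuit$ containing $J + c$ but missing any putative $J' + c$ with $v' \neq v$ would then contradict monotonicity. \textbf{The hard part} is producing such an auxiliary fixed body from only the two families of fixed sets established above; the natural candidate is a Minkowski sum $A + L$ with $A \subseteq H$ and $L \subseteq H^\perp$ $o$-symmetric, whose own invariance under $\diamondsuit$ would have to be bootstrapped along the way (for instance, by exhausting $A + L$ by slices of the form $L + a$, $a \in A$, and using the very type-(b) invariance we are trying to establish). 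Once this main step is completed, the hypotheses of Corollary \ref{cor-mink-sym-1} are in force and $\diamondsuit = M_H$ follows.
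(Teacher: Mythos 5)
Your high-level strategy coincides with the paper's: reduce to $\mathcal{B} = \mathcal{K}^n$, observe via Corollary \ref{cor-mink-sym-1} that it suffices to show $\diamondsuit$ is invariant on $H$-symmetric sets, and via Lemma \ref{lemma-mink-sym-no-proof}(ii) reduce to invariance on $H$-symmetric segments. Your two preliminary observations are correct, although the detour through Corollary \ref{cor-mink-sym-1} to identify the restriction of $\diamondsuit$ to $H^\perp$ with $M_{\{o\}}$ is redundant: that $\diamondsuit$ fixes $o$-symmetric bodies in $H^\perp$ is already the hypothesis, and that is all you use. The genuine gap is exactly the step you flag: you do not produce the auxiliary $\diamondsuit$-fixed body that constrains $\diamondsuit I$, and your candidate $A + L$ is, as you concede, circular.

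The paper sidesteps this with a \emph{convex hull}, not a Minkowski sum. Given an $H$-symmetric segment $I$ not contained in $H$, let $y := H \cap I$ (the center of $I$), let $\varepsilon \subseteq H$ be the line through $o$ and $y$, fix $x \in H$ on the ray from $o$ through $y$ strictly beyond $y$, and let $J \subseteq H^\perp$ be an $o$-symmetric segment through $o$, lying in the $2$-plane spanned by $\varepsilon$ and $I$, perpendicular to $\varepsilon$, and long enough that $I \subseteq P := \mathrm{conv}(J \cup \{x\})$. Then $\diamondsuit J = J$ by the $H^\perp$-restriction hypothesis and $\diamondsuit\{x\} = \{x\}$ by $H$-invariance, so monotonicity and convexity of $\diamondsuit P$ give $\diamondsuit P \supseteq \mathrm{conv}(J \cup \{x\}) = P$; since $V_1$ is strictly increasing on $\mathcal{K}^n$, $V_1$-preservation upgrades this to $\diamondsuit P = P$. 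Monotonicity now yields $\diamondsuit I \subseteq P$, a $2$-dimensional triangle, and combining this with the facts that $\diamondsuit I$ is an $H$-symmetric segment of length $V_1(I)$ containing $\diamondsuit\{y\} = \{y\}$ pins $\diamondsuit I$ down to $I$ directly; no nested-family contradiction argument in the style of Theorem \ref{thm3} is required. The missing idea, in short, is that one can \emph{manufacture} a $\diamondsuit$-fixed body as the convex hull of a fixed segment in $H^\perp$ and a fixed point of $H$, its invariance supplied by monotonicity plus $V_1$-preservation rather than built up slice-by-slice from the two families you already had.
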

\begin{proof}
As always, we may assume that ${\cal B}={\cal K}^n$. By Corollary \ref{cor-mink-sym-1}, we need to show that $\diamondsuit$ is invariant on $H$-symmetric sets. Since by Lemma \ref{lemma-mink-sym-no-proof}, any $H$-symmetric convex set $K$ can be written as:
$$K=\bigcup\{I:I\subseteq K\textnormal{ is an }H\textnormal{-symmetric line segment}\},$$
it is actually enough to show that $\diamondsuit$ is invariant on $H$-symmetric segments. Indeed, by the monotonicity of $\diamondsuit$, we would have: $\diamondsuit K\supseteq K$ and by the fact that $\diamondsuit$ is $V_1$-preserving, we would actually have  $\diamondsuit K=K$.

Let $I$ be an $H$-symmetric segment. Denote by $\varepsilon\subseteq H$ the line through the point $y:=H\cap I$ and the origin. Fix a point $x\in H$, such that $x$ is contained in the half-line defined by $o$ and $y$ but $x$ is not contained in the line segment $[o,y]$. Next, choose an origin symmetric line segment $J$, that is contained in the plane spanned by $\varepsilon$ and $I$, it is perpendicular to $\varepsilon$ and its length is so large that $I$ is contained in $P:=\textnormal{conv}(J\cup \{x\})$. Since $J\subseteq H^{\perp}$ and $J$ is $o$-symmetric, it follows that $J$ is $H$-symmetric, thus $\diamondsuit J=J$. Moreover, $x\in H$, thus $\diamondsuit \{x\}=\{x\}$. It follows that $\diamondsuit P\supseteq \textnormal{conv}(J\cup\{x\})=P$ and since $\diamondsuit$ is $V_1$-preserving, we get $\diamondsuit P=P$. Thus, by the monotonicity of $\diamondsuit$, we have $\diamondsuit I\subseteq \diamondsuit P=P$. However, since $\diamondsuit I$ is an $H$-symmetric line segment, it follows that $\diamondsuit I$ is perpendicular to $\varepsilon$, thus $\diamondsuit I$ is parallel to $I$ (here we used the fact that both $\diamondsuit I$ and $I$ are contained in the same plane, i.e. the plane that contains $P$). On the other hand, $\diamondsuit I\supseteq \diamondsuit\{y\}=\{y\}$, thus $\diamondsuit I$ is centered at $y$. Since $I$ and $\diamondsuit I$ have the same length, they must coincide and our claim follows.
\end{proof}
\textit{}\\
{\bf Acknowledgments.} I would like to thank Artem Zvavitch and the anonymous referee for many useful suggestions concerning the presentation of this note.

\vspace{2 cm}

\noindent Christos Saroglou \\
Department of Mathematical Sciences\\
Kent State University\\
Kent, OH 44242, USA \\
E-mail address: csaroglo@kent.edu \ \&\ christos.saroglou@gmail.com


\begin{thebibliography}{999}
\bibitem{girls} J. Abardia-Ev\'{e}quoz, and E. Saor\'{i}n G\`{o}mez, The role of the Rogers-Shephard inequality in the characterization of the difference body, Forum Mathematicum, 29 (2017), 1227-1243.
\bibitem{Bi-Ga-Gr} G. Bianchi, R. J. Gardner and P. Gronchi, Symmetrization in Geometry, Adv. Math. vol. 306 (2017), 51-88.
\bibitem{Bu} H. Busemann, Volumes in terms of concurrent cross-sections, Pacific J. Math. vol. 3 (1953), 1-12.
\bibitem{G2} R. J. Gardner, The Brunn-Minkowski inequality, Bull. Amer. Math. Soc. (N.S.) vol. 39 (2002), 355-405.
\bibitem{G} R. J. Gardner, Geometric tomography. Second edition. Encyclopedia of Mathematics and its Applications, vol. 58. Cambridge University Press, New York, 2006. xxii+492 pp. ISBN: 0-521; 0-521-68493-5.
\bibitem{G-H-W-1} R. J. Gardner, D. Hug and W. Weil, Operations between sets in geometry, J. Eur. Math. Soc. (JEMS) vol. 15 (2013), 2297-2352.
\bibitem{G-H-W-2} R. J. Gardner, D. Hug and W. Weil, The Orlicz-Brunn-Minkowski theory: a general framework, additions, and inequalities, J. Differential Geom. vol. 97 (2014), 427-476.
\bibitem{H-S} C. Haberl and F. E. Schuster, Asymmetric affine Lp Sobolev inequalities, J. Func. Anal. vol. 257 (2009), 641-658.
\bibitem{Mo} M. Ludwig, Minkowski valuations, Trans. Amer. Math. Soc. vol. 357 (2005), 4191-4213.
\bibitem{LYZ}  E.  Lutwak,  D. Yang and  G.  Zhang, Volume inequalities for subspaces  of $L^p$,  J. Differential Geom. vol. 68 (2004), 159-184
\bibitem{LYZ2} E. Lutwak, D. Yang and G. Zhang, Orlicz centroid bodies, J. Differential Geom. vol. 84 (2010), 365-387.
\bibitem{Me-Pa} M. Meyer and A. Pajor, On the Blaschke-Santal\'{o} inequality, Arch. Math. vol. 55 (1990), 82-93.
\bibitem{S-W} F. Schuster and T. Wannerer, Even Minkowski valuations, Amer. J. Math. vol. 137 (2015), 1651-1683.
\bibitem{S} R. Schneider, Convex bodies: the Brunn-Minkowski theory. Second expanded edition. Encyclopedia of Mathematics and its Applications. vol. 151. Cambridge University Press, Cambridge, 2014. xxii+736 pp. ISBN: 978-1-107-60101-7
\bibitem{S-S}  R. Schneider and F. Schuster, Rotation equivariant Minkowski valuations, Int. Math. Res. Not. 2006, Art. ID 72894, 20 pp.


\end{thebibliography}
\end{document}